\documentclass{amsart}
\usepackage[utf8]{inputenc}
\usepackage{amssymb}
\usepackage[hidelinks]{hyperref}
\allowdisplaybreaks
\numberwithin{equation}{section}

\newtheorem{theorem}{Theorem}[section]
\newtheorem{lemma}[theorem]{Lemma}
\newtheorem{proposition}[theorem]{Proposition}
\newtheorem{corollary}[theorem]{Corollary}
\theoremstyle{remark}
\newtheorem{remark}[theorem]{Remark}
\newtheorem*{remark*}{Remark}
\newcommand{\R}{\mathbb R}
\newcommand{\eps}{\varepsilon}
\newcommand{\Om}{\Omega}
\newcommand{\dd}{\,\mathrm d}
\DeclareMathOperator{\diam}{diam}
\DeclareMathOperator{\dist}{dist}

\begin{document}
\title[Neumann eigenvalue comparison on thin convex domains]
{A quadratic comparison of Neumann eigenvalues
on thin convex domains in arbitrary dimension}
\author{Qixuan Hu}
\address{Department of Mathematics and Computer Sciences,
Shantou University, Guangdong, P. R. China}
\email{qxhu@stu.edu.cn}
\date{\today}
\thanks{This work was supported by SRIG under Grant NTF25026T}
\subjclass[2020]{Primary 35P15; Secondary 35J25, 34B24}
\keywords{Neumann eigenvalues, thin convex domains, spectral comparison,
weighted intervals, quadratic estimates}

\begin{abstract}
Let $\Omega\subset\mathbb R^n$ be a bounded convex domain that is thin around a
chosen diameter segment. We compare its Neumann spectrum with the spectrum
of that segment weighted by the $(n-1)$-dimensional volumes of its
perpendicular sections. We prove an $O(\varepsilon^2)$ comparison of the mean-zero
inverse operators and, consequently, an $O(\varepsilon^2)$ eigenvalue comparison
for every fixed index in every dimension $n\ge2$. The constants depend
only on the dimension and the eigenvalue index. Thin rectangles show that the quadratic exponent is optimal.
\end{abstract}
\maketitle
\tableofcontents

\section{Introduction}

The low-lying Neumann spectrum of a thin domain describes diffusion with
reflecting boundary conditions and vibrations subject to a vanishing
normal derivative. When a domain is much thinner in its transverse
directions than in its longitudinal direction, transverse oscillations
are energetically expensive. This suggests that low-energy states should
be approximately constant on transverse sections and should be described
by a one-dimensional problem. Quantifying this approximation is useful
both for reducing a multidimensional spectral problem to an ordinary
differential equation and for understanding how geometric degeneration
affects eigenvalues. Thin-domain spectral problems belong to a broader
theory of dimension reduction; Grieser's survey~\cite{Grieser} provides
an overview of its contexts, methods, and results.

For a convex domain with a chosen diameter segment, the appropriate
one-dimensional model retains the volumes of the perpendicular sections.
Writing these volumes as $h(x)$, the effective operator is formally
$-h^{-1}(hv')'$, with the natural Neumann conditions associated with its
weighted energy form. Indeed, lifting a function by $Jv(x,y)=v(x)$ gives
exactly the quotient
\[
 \frac{\int_{\Om}|\nabla Jv|^2}{\int_{\Om}|Jv|^2}
 =\frac{\int h|v'|^2\dd x}{\int h|v|^2\dd x}.
\]
The weight therefore records geometry that an unweighted interval would
lose. The min--max principle immediately gives
$\mu_k(\Om)\leq\mu_k(N)$, where $N$ denotes the weighted interval.
The main question is how accurately the reverse inequality holds,
uniformly over thin convex domains whose sectional shapes may vary or
degenerate.

One-dimensional approximation has also played an important role in
locating the first Neumann nodal set. Jerison~\cite{Jerison} studied the
first nodal line and the first positive Neumann eigenvalue of convex
planar domains. Choi, Jerison, and Kim~\cite{CJK} developed the
higher-dimensional case of this approach. These works motivate
the use of a longitudinal model while also illustrating the geometric
difficulties caused by moving transverse sections. 

A related direction asks for quantitative improvements of the
classical diameter lower bound. For a bounded convex domain
$\Omega\subset\mathbb R^n$, write $D=\operatorname{diam}(\Omega)$ and
define its projective width by
\[
 W(\Omega)=\inf_{e\in\mathbb S^{n-1}}
   \operatorname{diam}\bigl(\pi_{e^\perp}\Omega\bigr),
\]
where $\pi_{e^\perp}$ denotes orthogonal projection onto the hyperplane
perpendicular to $e$. In dimension two, this is the usual minimum
width, namely the minimum distance between two parallel supporting
lines. For planar convex domains it also coincides with the sectional
width used by Wang and Xu; see \cite[Lemma~8.1]{WX}. In higher dimensions,
$W(\Omega)$ measures transverse size relative to a line, rather than
the minimum distance between two parallel supporting hyperplanes.

\begin{theorem}[Wang--Xu, {\cite[Theorem~1.2]{WX}}, rescaled]
\label{thm:WX}
There exist universal constants $c_0,c_1>0$ such that every bounded
open convex planar domain $\Om$ of diameter one with
$0<W(\Om)<c_0$ satisfies
\[
 \mu_1(\Om)\ge\pi^2+c_1W(\Om)^2.
\]
\end{theorem}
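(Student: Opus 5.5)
This theorem is quoted from Wang--Xu \cite[Theorem~1.2]{WX}, and the only work is to check the rescaling and plan how one would reprove it. Wang--Xu state their result for convex domains with a normalized length scale and the sectional width of \cite[Lemma~8.1]{WX}. By that lemma, in the plane the sectional width coincides with $W(\Om)$. Under a dilation $x\mapsto\lambda x$ we have $\mu_1\mapsto\lambda^{-2}\mu_1$, $D\mapsto\lambda D$ and $W\mapsto\lambda W$. Hence the scale-invariant inequality $\mu_1 D^2\ge\pi^2+c_1(W/D)^2$ for $W/D<c_0$ is equivalent to the stated version at $D=1$. So the first step is only to verify that the normalizations match and to adjust $c_0,c_1$ accordingly.

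To reprove it directly, I would take a first Neumann eigenfunction $u$. Choose coordinates with the diameter segment along $[0,1]\times\{0\}$. Let $h(x)$ be the section length, which is concave by Brunn--Minkowski in dimension two, with $W\lesssim\max h\lesssim W$. The one-dimensional reduction is the weighted problem $-(hv')'=\mu hv$ with natural boundary conditions. For concave $h$ on an interval of length one, this problem satisfies the Payne--Weinberger bound $\mu_1(N)\ge\pi^2$. The proof then has two steps. First, quantify Payne--Weinberger for concave weights: the deficit $\mu_1(N)-\pi^2$ is bounded below by a multiple of the deviation of $h$ from a constant, in a suitable sense. Second, show that $\mu_1(\Om)$ is within $O(W^2)$ of $\mu_1(N)$ from below, up to terms that are themselves controlled by the geometry.

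The hard part is a subtle balance. The transverse error between $\mu_1(\Om)$ and $\mu_1(N)$ is itself of order $W^2$, which is exactly the size of the gain. For a rectangle $[0,1]\times[0,W]$ we get $\mu_1=\pi^2$ exactly, which seems to contradict the claim. But for a rectangle the diameter is the diagonal, and rescaling to $D=1$ makes the long side $1/\sqrt{1+W^2}$. This gives $\mu_1=\pi^2(1+W^2)$, so the gain comes from the diameter being longer than the longitudinal extent.

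The strategy is therefore a case analysis on how far the longitudinal length $L$ of the projection onto the diameter direction falls below $1$:
\begin{itemize}
\item If $1-L\gtrsim W^2$, then $\mu_1\ge\pi^2/L^2$ up to an $O(W^2)$ transverse error with a small constant. This needs a careful transverse expansion, using the convexity of the graphs bounding $\Om$ to estimate $\int|\partial_y u|^2$ against the oscillation of $u$ on sections.
\item If instead $\Om$ reaches nearly the full length, convexity forces $h$ to vanish at both ends, that is, $\Om$ tapers. Tapering gives a weighted interval with $\mu_1(N)\ge\pi^2+cW^2$, by comparison with a profile such as a tent function. The transverse error must then be shown to be $o(W^2)$, or at least to carry a smaller constant.
\end{itemize}
Making these two regimes overlap with uniform constants is the main obstacle. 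I would try first to establish a sharp transverse estimate of the form $\mu_1(\Om)\ge\mu_1(N)\,(1-CW^2\theta)$, where $\theta$ measures the slope of the boundary graphs.
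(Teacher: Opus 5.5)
Your opening paragraph matches how the paper treats this statement. The paper gives no proof: it quotes \cite[Theorem~1.2]{WX}, identifies the Wang--Xu sectional width with $W(\Om)$ through \cite[Lemma~8.1]{WX}, and normalizes the diameter by dilation. Your scaling check is correct: $\mu_1D^2$ and $W/D$ are dilation invariant, so only $c_0,c_1$ need adjusting. With the citation, that settles the statement.

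The direct reproof you outline, however, does not work as written, and you should not present it as an argument.

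First, the projection of $\Om$ onto the diameter direction is always exactly $[0,1]$. A point of $\overline\Om$ with $x>1$ or $x<0$ would be at distance greater than $1$ from $A$ or from $B$. Hence $L\equiv1$, and your first case never occurs. For the rectangle, the gain does not show up as $L<1$ along the diameter. It sits in the end caps of the sectional profile, where $h$ grows linearly over a length $b=W^2$. In general, the constraint $|X-B|\le1$ forces $|y|^2\le2x$, so $h$ cannot reach size comparable to $W$ before $x$ is of order $W^2$.

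Second, in the tapered regime the transverse loss is not $o(W^2)$. For the diameter-one rectangles, Proposition~\ref{prop:rectangle} gives $\mu_1(N)-\mu_1(\Om)=\pi^2W^2+O(W^4)$, while $\mu_1(N)-\pi^2=2\pi^2W^2+O(W^4)$. The whole content of the theorem is therefore a loss bound, uniform over convex domains, whose constant is strictly smaller than the longitudinal gain constant. Your plan names this balance but gives no mechanism for it.

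The comparisons in this paper are far too crude to supply it. Theorem~\ref{thm:HuQuadratic} has an unspecified $C$. Theorem~\ref{thm:main} with $n=2$ and $\eps=W$ bounds the loss only by $256W^2\lambda_1^2\ge256\pi^4W^2$, far above the rectangles' gain of $2\pi^2W^2$. Also, if your $\theta$ is a supremum of boundary slopes, it is not small: in diagonal coordinates the short sides of a thin rectangle have slope of order $1/W$.

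So the statement rests on the citation and rescaling, not on the sketch.
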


Amato, Bucur, and Fragal\`a obtained a higher-dimensional quantitative
inequality in terms of the John ellipsoid, that is, the ellipsoid of
maximal volume contained in the convex domain. Write
$a_1\ge\cdots\ge a_n$ for its semi-axis lengths.

\begin{theorem}[Amato--Bucur--Fragal\`a, {\cite[Theorem~2]{ABF}}]
\label{thm:ABF}
For every $n\ge2$ there exists $c_n>0$ such that, if $\Om\subset\R^n$
is a bounded open convex domain of diameter $D$ and its John ellipsoid
has semi-axes $a_1\ge\cdots\ge a_n$, then
\[
 \mu_1(\Om)\ge\frac{\pi^2}{D^2}+c_n\frac{a_2^2}{D^4}.
\]
\end{theorem}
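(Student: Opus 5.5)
The plan is to normalize $D=1$ and split into two regimes according to the size of $a_2$. In the \emph{fat regime} $a_2\ge\delta_n$, for a small $\delta_n>0$ to be fixed, I would argue by compactness. Suppose there were a sequence of convex domains of diameter one with $a_2\ge\delta_n$ and $\mu_1(\Om_j)-\pi^2\to0$. By Blaschke selection a subsequence converges in Hausdorff distance to a convex body. That body still contains an ellipsoid with two semi-axes $\ge\delta_n$, so it is genuinely at least two-dimensional and has nonempty interior. Neumann eigenvalues are continuous under Hausdorff convergence of convex domains with nonempty interior. Hence the limit $\Om_\infty$ would satisfy $\mu_1(\Om_\infty)=\pi^2=\pi^2/\diam(\Om_\infty)^2$. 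But equality in the Payne--Weinberger inequality is never attained by a bounded convex domain with nonempty interior, since the equality analysis forces every needle to be an unweighted segment of full length. This gives $\mu_1\ge\pi^2+\eta_n\ge\pi^2+c_na_2^2$ in this regime.

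In the \emph{thin regime} $a_2<\delta_n$, John's theorem gives $E\subset\Om\subset x_0+n(E-x_0)$. Hence $\Om$ lies in a tube of radius $\le na_2$ around the major axis of $E$, whose direction I call $e_1$, and $\Om$ contains a disc of radius $a_2$ orthogonal to $e_1$. Let $L$ be the length of the projection of $\Om$ onto $e_1$. The key geometric step is to show that $L^2\le1-c_na_2^2$. Take the two extreme points $p,q$ of $\Om$ in the $e_1$-direction. By convexity, $\Om$ contains the cone from $p$ over the $a_2$-disc and the cone from $q$ over it. Since the disc has a point transversally at distance $\ge a_2/2$ from the line through $p$ (or through $q$), one can produce two points of $\Om$ whose distance is at least $\sqrt{L^2+c\,a_2^2}$. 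Since that distance is at most $1$, the bound follows.

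It then remains to prove $\mu_1(\Om)\ge\pi^2/L^2$, with $L$ the \emph{projected} length rather than the diameter. I would rerun the Payne--Weinberger argument. Starting from a first eigenfunction $u$, repeatedly bisect $\Om$ by hyperplanes chosen, by the ham-sandwich/rotation argument, so that $\int u=0$ on each piece, and, crucially, choose these hyperplanes to contain directions transverse to $e_1$. The pieces then become needles that are almost parallel to $e_1$. Each needle carries a $1/(n-1)$-concave weight $h$, and its Rayleigh quotient for $u$ is bounded below by the weighted one-dimensional Neumann eigenvalue $\ge\pi^2/\ell^2$, where $\ell$ is the needle length. If all needles are within angle $O(a_2)$ of $e_1$, their lengths satisfy $\ell\le L+O(a_2^2)$, with a constant that is small relative to the gain in $L$. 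I expect this bound to follow because $\Om$ sits in a tube of radius $na_2$: a needle of length $\ell\approx1$ inside such a tube makes angle $\lesssim a_2$ with $e_1$, so $\ell\le L/\cos\theta$ with $\theta\lesssim na_2$.

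This last step is the main obstacle. The error $L/\cos\theta-L\sim n^2a_2^2$ is of the same order as the geometric gain $c_na_2^2$, so the constants must be compared honestly. My first attempt would be to sharpen the geometric step: use that the needle's endpoints lie on $\partial\Om$ and measure the gain with the same tilted needle, so that $\ell^2\le1-c\,\mathrm{dist}^2$ directly. Here $\mathrm{dist}$ is the transverse distance from the needle to a point of $E$ at height $\sim a_2$. Then apply $\mu\ge\pi^2/\ell^2$ needle by needle, and use that an ellipsoid of transverse radius $a_2$ near the centre cannot be within $o(a_2)$ of every needle simultaneously. If that fails, the fallback is to compare $\mu_1(\Om)$ with the weighted interval model $N$ up to an $o(a_2^2)$ error and extract the $a_2^2$ gain from the one-dimensional problem.
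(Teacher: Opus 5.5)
The paper does not prove this statement; it is quoted from Amato--Bucur--Fragal\`a as background. Your proposal therefore has to stand on its own, and the thin-regime argument does not.

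\textbf{The thin regime.} The geometric step $L^2\le1-c_na_2^2$ is false. Take the rhombus with vertices $(0,0)$, $(1,0)$, $(\tfrac12,\pm w)$, $0<w<\tfrac12$. Its diameter is $1$. Its John ellipse is centred at $(\tfrac12,0)$, with semi-axis $\tfrac1{2\sqrt2}$ along $e_1=(1,0)$ and $a_2=w/\sqrt2$ along $e_2$. Its projection onto $e_1$ has length $L=1=D$.

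Your two-point construction breaks down because the cones from $p$ and $q$ over the central section taper linearly. Near the ends there is no transverse room of order $a_2$, and no two points of the domain are more than $1$ apart. The sharpened needle version $\ell^2\le1-c\,\mathrm{dist}^2$ fails on the same example: the axis needle has $\ell=1$ and lies at distance $a_2$ from $(\tfrac12,a_2)\in\partial E$.

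Yet the Payne--Weinberger deficit here is of order one. By Theorem~\ref{thm:main}, as $w\to0$, $\mu_1$ tends to the first eigenvalue $4j_{0,1}^2\approx23.1$ of the interval with tent weight $\min\{x,1-x\}$. This gain comes entirely from the tapering of the sectional weight, not from any needle being shorter than $D$. So no scheme of the form ``$\mu_1\ge\pi^2/L^2$ plus a length deficit'' can capture it.

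The missing idea is a dichotomy. Either the sectional (or needle) densities deviate quantitatively from constant near the ends, and this must be turned into a quantitative one-dimensional gain for $1/(n-1)$-concave weights. Or they stay nearly constant up to the ends, and only then does convexity force a diameter exceeding the axial length by $\gtrsim a_2^2$, as for rectangles.

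\textbf{The fallback.} It does not work as stated either. For the diameter-based model $N$ of this paper, an $o(a_2^2)$ comparison is impossible. Proposition~\ref{prop:rectangle} gives $\mu_1(N_\eps)-\mu_1(\Om_\eps)=\pi^2\eps^2+O(\eps^4)$ for the diameter-one rectangles, while $a_2=\eps/2$. The model error is thus of exactly the order of the gain, so one needs sharp constants. There, the one-dimensional gain $\mu_1(N_\eps)-\pi^2\approx2\pi^2\eps^2$ exceeds the loss only by a factor two, whereas Theorem~\ref{thm:main} only bounds the loss by $16^n\eps^2\lambda_1^2$.

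\textbf{The fat regime.} This part is incomplete for $n\ge3$. The condition $a_2\ge\delta_n$ bounds only two John semi-axes from below, so the Hausdorff limit can collapse onto a lower-dimensional convex set with empty interior (e.g.\ boxes with one side tending to $0$). Then $\mu_1(\Om_j)$ converges to a weighted eigenvalue on the collapsed set with a power-concave weight. You would need a strict weighted Payne--Weinberger inequality there, which is the setting of Theorem~\ref{thm:ABFweighted}. Even when the limit has nonempty interior, ``equality forces full-length unweighted needles'' needs a further argument that such needles cannot fill a set of positive volume.
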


Their subsequent result also permits power-concave weights and nonlinear
energies. Write
\[
 \nu_p(\Om,\omega)=
 \inf_{\substack{0\ne u\in W^{1,p}(\Om,\omega)\\
                  \int_\Om\omega|u|^{p-2}u=0}}
 \frac{\int_\Om\omega|\nabla u|^p}{\int_\Om\omega|u|^p}.
\]
Here $W^{1,p}(\Om,\omega)$ is the weighted Sobolev space with finite
weighted $p$-mass and gradient energy; in particular,
$\nu_2(\Om,1)=\mu_1(\Om)$.

\begin{theorem}[Amato--Bucur--Fragal\`a, {\cite[Theorem~1]{ABFnonlinear}}]
\label{thm:ABFweighted}
Let $1<p<\infty$, $n\ge2$, and $r\ge1$ be an integer. There exists
$c(p,n,r)>0$ such that, for every bounded open convex domain
$\Om\subset\R^n$ of diameter $D$, and every positive integrable
weight $\omega$ for which $\omega^{1/r}$ is concave,
\[
 \nu_p(\Om,\omega)
 \ge\left(\frac{\pi_p}{D}\right)^p
       +c(p,n,r)\frac{a_2^2}{D^{p+2}},
 \qquad
 \pi_p=\frac{2\pi(p-1)^{1/p}}{p\sin(\pi/p)},
\]
where $a_2$ is the second largest semi-axis of the John ellipsoid.
\end{theorem}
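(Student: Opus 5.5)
We may normalize $D=1$ by scaling, since both sides of the inequality scale like $D^{-p}$ once $a_2$ is replaced by $a_2/D$. The plan has two regimes, depending on whether $a_2$ is bounded below or small. In the regime $a_2\ge\delta$, with $\delta=\delta(p,n,r)$ to be fixed, we argue by compactness. Normalize $\omega$, say $\max\omega=1$. The family of pairs $(\Om,\omega)$ with $\Om$ convex of diameter one, John semi-axis $a_2\ge\delta$, and $\omega^{1/r}$ concave is compact in the sense of Hausdorff convergence of domains and locally uniform convergence of weights, by Blaschke selection and the equi-Lipschitz property of concave functions on interior sets. Lower semicontinuity of $\nu_p$ along such sequences should follow from uniform Poincaré and Sobolev inequalities for convex sets with inradius bounded below, and the weight may degenerate only at the boundary. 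On the other hand, the known sharp bound $\nu_p(\Om,\omega)\ge\pi_p^p$ for log-concave weights (the Payne--Weinberger inequality in the form of Ferone--Nitsch--Trombetti) should be \emph{strict} when $\Om$ is not a segment. Compactness then turns strictness into a uniform gap $\nu_p\ge\pi_p^p+\eta(\delta)\ge\pi_p^p+\eta(\delta)a_2^2$.

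In the thin regime $a_2<\delta$ we use Payne--Weinberger slicing. Let $u$ be a minimizer. We cut $\Om$ by parallel hyperplanes into convex slabs $\Om_j$ with $\int_{\Om_j}\omega|u|^{p-2}u=0$ on each, and we take the slabs thin in all but one direction. On each slab the quotient is bounded below, up to a controllable error, by a one-dimensional quotient on an interval $I_j$ of length $L_j\le 1$. The one-dimensional weight is $h_j(x)=\int_{\text{section}}\omega$. By Brunn--Minkowski, $h_j^{1/m}$ is concave with $m=r+n-1$, and this integer power-concavity is exactly why the hypothesis is on $\omega^{1/r}$. The key one-dimensional input is a quantitative bound: for $h^{1/m}$ concave on an interval of length $L$,
\[
 \nu_p(I,h)\ge\Bigl(\frac{\pi_p}{L}\Bigr)^p\bigl(1+c\,\Theta(h)\bigr),
\]
where $\Theta(h)$ measures how much $h$ decays toward the endpoints. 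We would prove this through the Prüfer angle substitution for the weighted $p$-Laplacian $-(h|v'|^{p-2}v')'=\nu h|v|^{p-2}v$. There the term $(h'/h)|v'|^{p-2}v'$ gives a definite-sign contribution, which can be integrated against the concavity of $h^{1/m}$.

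Next the geometry must be linked to $a_2$. Since $\Om$ has diameter one and contains an ellipse with semi-axes $a_1\ge a_2$, a planar section containing two such axes forces a dichotomy. Either the longitudinal extent along the slicing direction satisfies $L\le 1-c\,a_2^2$, in which case the one-dimensional bound $\nu_p\ge(\pi_p/L)^p$ already yields the gain $c\,a_2^2$. Or $L$ is close to $1$, and then convexity together with the diameter constraint forces the transverse sections to shrink near the ends: a point at transverse distance $t$ from the axis must lie at longitudinal distance at least about $t^2$ from the endpoints. The second case feeds $\Theta(h)\gtrsim a_2^2$ into the one-dimensional estimate.

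The main obstacle is keeping the slicing error below $a_2^2$. The standard Payne--Weinberger reduction loses terms of the order of the transverse width of the slabs, and these must be made $o(a_2^2)$ rather than merely $o(1)$. We would first try to slice very finely, with slab thickness $\ll a_2^2$ in the $n-2$ secondary transverse directions. This makes each piece essentially a thin planar convex set in which $a_2$ persists as the width, so the estimate reduces to the two-dimensional weighted case with the effective power $m=r+n-2$. The second delicate point is making the one-dimensional deficit $\Theta(h)$ quantitative for nonlinear $p$ with constants depending only on $(p,n,r)$.
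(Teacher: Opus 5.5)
The paper gives no proof of this statement. It is quoted from Amato--Bucur--Fragal\`a as background, and nothing later in the paper depends on it, so there is no internal argument to compare with. Judged on its own, your outline has a genuine gap in the thin regime and an unjustified step in the compact regime.

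\textbf{Thin regime.} Slicing into pieces with $\int_{\Om_j}\omega|u|^{p-2}u=0$ only yields $\nu_p(\Om,\omega)\ge\sum_j\theta_j\,\nu_p(\Om_j,\omega)$, where $\theta_j=\int_{\Om_j}\omega|u|^p/\int_\Om\omega|u|^p$. The pieces are dictated by $u$, not by the geometry.

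\emph{The dichotomy transfers to no piece.} Your dichotomy (short longitudinal extent, or sections shrinking near the endpoints) is a property of $\Om$. You need it for pieces carrying a fixed fraction of the $\theta_j$. A thin piece of width $w$ lying along a diameter need only shrink within a distance that tends to $0$ with $w$. In a thin rectangle, pieces hugging a diagonal have length exactly $D$ and asymptotically constant profile. Nothing in your outline prevents the mass of $u$ from sitting on such pieces.

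\emph{Restricting the cuts does not give planar pieces of width $a_2$.} Zero-mean cuts come from rotating a hyperplane about an $(n-2)$-plane and applying the intermediate value theorem. If all normals lie in a $k$-dimensional subspace, this is just the Payne--Weinberger construction for the pushed-forward function in $\R^k$, and in general the cells become thin in only $k-1$ directions. With normals confined to the $n-2$ secondary directions, the pieces are therefore not planar. For $n=3$ it fails outright: $c\mapsto\int_{\{y_2<c\}}\omega|u|^{p-2}u$ vanishes at both ends and need not vanish in between. With normals in the whole transverse space, each planar piece contains the longitudinal direction and a transverse direction chosen by $u$. Its width can then be of order $a_n\ll a_2$.

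\emph{The planar weighted case is left unproved.} Even granting planar pieces of width $\sim a_2$, slicing them again just produces needles chosen by $u$. The alternative, comparing with the sectional model $N$, loses an amount of the same order as the gain. By Proposition~\ref{prop:rectangle}, $\mu_1(N_\eps)-\mu_1(\Om_\eps)\approx\pi^2\eps^2$, which to leading order equals the excess $\mu_1(\Om_\eps)-\pi^2$ you are trying to capture. So an $O(\eps^2)$ comparison such as Theorem~\ref{thm:main}, with constant $16^n\lambda_1^2$, cannot transfer the endpoint gain of $N$ to $\Om$.

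\textbf{Compact regime.} When $n\ge3$, $a_2\ge\delta$ gives no lower bound on the inradius, because $a_3,\dots,a_n$ may tend to $0$. Blaschke limits can then have empty interior, so the uniform Poincar\'e and Sobolev inequalities you invoke are unavailable. You would need two further ingredients:
\begin{itemize}
\item spectral convergence under collapse to a weighted problem on the lower-dimensional limit, with the exponent $1/r$ replaced by $1/(r+k)$ after $k$ directions collapse;
\item strictness of $\nu_p>\pi_p^p$ for every non-segment member of that enlarged class.
\end{itemize}
The strictness is only asserted, and so is the quantitative Pr\"ufer estimate with $\Theta(h)$.

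Note also that Borell--Brascamp--Lieb works for every real $r>0$, so it does not explain why $r$ must be an integer. Integrality instead points to the convex lift $\{(x,z)\in\Om\times\R^r:|z|<\eta\,\omega(x)^{1/r}\}$. This realizes the weighted problem as a collapse limit of unweighted Neumann problems in $\R^{n+r}$.
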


These theorems measure excess above a diameter-based lower bound.
Our reference value is instead the spectrum of the weighted interval
formed from the actual sections perpendicular to a diameter.
For this comparison, the author's earlier work~\cite{Hu} establishes
the following two estimates.

\begin{theorem}[Hu, {\cite[Theorem~1.10]{Hu}}]
\label{thm:HuLinear}
For a bounded open convex domain $\Om\subset\R^n$, $n\ge2$, with  $\diam\Om=1$, and $N$ carries
the sectional-volume measure along the chosen diameter of $\Om$. Then, for
every positive integer $k$,
\[
 \bigl(1-2W(\Om)(1+\mu_k(N))\bigr)\mu_k(N)
 \le\mu_k(\Om)\le\mu_k(N).
\]
\end{theorem}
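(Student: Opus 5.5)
The upper bound $\mu_k(\Om)\le\mu_k(N)$ is the easy half. It follows directly from min--max through the lifting $Jv(x,y)=v(x)$. This map is an isometry from $L^2(N)$ (weight $h$) into $L^2(\Om)$ and preserves the energy, so lifting any $k+1$-dimensional trial space for $N$ gives a trial space for $\Om$ with the same Rayleigh quotients. For the lower bound we go in the opposite direction, using the averaging map
\[
Pu(x)=\frac{1}{h(x)}\int_{\Om_x}u(x,y)\dd y ,
\]
where $\Om_x$ is the section perpendicular to the diameter at the point $x$ and $h(x)=|\Om_x|$. The plan is to show that $P$ almost preserves norms and does not increase energy, up to a relative error of order $W(\Om)$ times energy. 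We then feed $P$ applied to the first $k+1$ eigenfunctions of $\Om$ into min--max for $N$.

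\textbf{Energy.} Place the diameter along $[0,1]\times\{0\}$, and write $\partial_x$ for the longitudinal derivative. Differentiating $hPu$ in $x$ produces a bulk term $\int_{\Om_x}\partial_x u$ and a boundary term from the moving section. Convexity makes $h^{1/(n-1)}$ concave on $[0,1]$ (Brunn--Minkowski). The first route I would try is to control the boundary term through this concavity, as in Jerison's and Choi--Jerison--Kim's treatment of moving sections. The fallback is to avoid differentiating $P$ at all. Instead, compare the quadratic forms via the Neumann problem of $N$: for $f\in L^2(N)$ solve $-(hv')'=hf$ on $N$, and test against $u$ on $\Om$ by integration by parts in $x$ alone. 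The boundary contributions then vanish because the slices shrink consistently with convexity. The target estimate is
\[
\int h|(Pu)'|^2\dd x\le \int_\Om|\nabla u|^2 .
\]
If this is too strong, a weaker version with a factor $1+CW$ would suffice.

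\textbf{Norm loss.} The key inequality is $\|u-Pu\|_{L^2(\Om)}^2\le CW(\Om)^2\int_\Om|\nabla_y u|^2$. This is a transverse Poincaré inequality on each convex section $\Om_x$, whose diameter is at most $W$ up to a constant (using $\diam\Om=1$ and the definition of $W$ via projection). The Payne--Weinberger constant $\pi^2/\diam(\Om_x)^2$ holds for convex sets in every dimension. So for eigenfunctions $u$ in the span of the first $k+1$,
\[
\|Pu\|^2\ge\|u\|^2-CW^2\mu_k(\Om)\|u\|^2 .
\]
The linear-in-$W$ form $2W(1+\mu_k(N))$ in the statement indicates the author loses one square root somewhere, for instance through a Cauchy--Schwarz cross term $2\|u\|\,\|u-Pu\|\le 2W\mu_k^{1/2}\|u\|^2$. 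A crude route is to bound $\|Pu\|^2$ below via $\|u\|^2-2\|u\|\,\|u-Pu\|$. Then $W\mu_k^{1/2}\le W(1+\mu_k)/2$ absorbs into the stated constant, and $P$ is injective on the span once this quantity is less than $1$.

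\textbf{Conclusion.} Applying min--max on $N$ to $P(\mathrm{span}\{u_0,\dots,u_k\})$ gives
\[
\mu_k(N)\le\frac{\mu_k(\Om)}{1-2W(1+\mu_k(N))} ,
\]
after replacing $\mu_k(\Om)$ by the larger $\mu_k(N)$ in the error term. If the factor is nonpositive, the inequality is trivial. The main obstacle is the energy inequality for $P$ in the presence of moving, tilted sections. The Poincaré step is routine given convexity of the slices.
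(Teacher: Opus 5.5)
There is a genuine gap, and it sits in the step you yourself flag as the main obstacle: the energy inequality for $P$.

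\textbf{What is fine.} Your upper bound via $J$ is correct, and so is the transverse Poincar\'e step. In fact $JP$ is an orthogonal projection, so $\|JPu\|^2=\|u\|^2-\|u-JPu\|^2\ge(1-CW^2\mu_k(\Om))\|u\|^2$ exactly. The norm therefore loses nothing at first order, and the linear error does not come from a Cauchy--Schwarz cross term.

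\textbf{Why the energy target fails.} The inequality $\int_0^1h|(Pu)'|^2\le(1+CW)\int_\Om|\nabla u|^2$ is false in the generality you state it. Write $(Pu)'=P(u_x)+B_u$ as in Lemma~\ref{lem:average}. Then $hB_u=\int_{\partial\Om_x}V(u-Pu)\dd\sigma$, and convexity only gives $|V|\le(y\cdot\nu_{\Om_x})/d(x)$. Within distance of order $W$ of $A$ or $B$, the domain can be a cone of aperture of order one, and there this term is as large as $\nabla_yu$ itself.

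Concretely, let $\Om$ be the triangle with vertices $(0,0),(\eps,\eps),(1,0)$. Then $\Om_x=(0,x)$ for $x<\eps$, and $\Om_x=(0,s(x))$ with $s(x)=\eps(1-x)/(1-\eps)$ for $x\ge\eps$. Define $u$ piecewise:
\begin{itemize}
\item $u=x+y/2$ for $x<\eps$, so $Pu=\frac54x$ there;
\item $u=\frac54\eps+\frac\eps2\chi(x)\bigl(y/s(x)-\frac12\bigr)$ for $\eps\le x\le2\eps$, with $\chi(x)=(2\eps-x)/\eps$;
\item $u=\frac54\eps$ for $x>2\eps$.
\end{itemize}
Then $Pu$ is constant for $x\ge\eps$. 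One computes $\int_0^1h|(Pu)'|^2\dd x=\frac{25}{32}\eps^2$, whereas $\int_\Om|\nabla u|^2=\frac58\eps^2+\frac5{48}\eps^2+O(\eps^3)=\frac{35}{48}\eps^2+O(\eps^3)$. Moreover $u-\frac54\eps$ has $L^2$ norm $O(\eps^2)$.

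The ratio tends to $\frac{15}{14}$. So no inequality $\int_0^1h|(Pu)'|^2\le(1+CW)\int_\Om|\nabla u|^2+CW\|u\|_{L^2(\Om)}^2$ can hold uniformly. A valid version would need special properties of the eigenfunctions near the ends of the diameter, and your proposal supplies none. Brunn--Minkowski concavity of $h^{1/(n-1)}$ does not help, because the bad term involves the transverse oscillation of $u$ on the moving boundary, not $h$.

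\textbf{The fallback and how to fix it.} Your fallback points the right way but is misdescribed, and it does not lead to your target inequality. If $v$ solves the weighted Neumann problem with datum $f$ and you test $Jv$ against $z\in H^1(\Om)$, the lateral boundary contributions do not vanish. What remains is the residual $r_f(z)=-\int_0^1hv'B_z\dd x$, which vanishes only for $z=J\phi$.

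The dual route works because the singular factor $1/d(x)$ is carried by $v$, not by $z$. The flux $hv'=-F$ vanishes at both endpoints. The volume comparison $\int_0^xh\le2^{n-1}xh(x)$ for $x\le\frac12$, together with Hardy's inequality, gives $\int_0^1h|v'|^2/d^2\le4^n\int_0^1h|f|^2$ (Lemma~\ref{lem:flux}). Combined with the weighted trace bound \eqref{eq:weightedtrace}, this yields $|r_f(z)|\le2^nb_n\eps\|f\|\,\|\nabla_yz\|$. Hence $\|\nabla(T-S)g\|\le L_n\eps\|g\|$ for $T=A_\Om^{-1}$ and $S=JA_N^{-1}P$.

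The eigenvalue bound then follows from min--max applied to these compact operators, not from pushing eigenfunctions of $\Om$ through $P$. This is how the paper proceeds. It cites the linear statement from \cite{Hu} rather than reproving it, and via $\langle(T-S)g,g\rangle=\|\nabla w\|^2$ it even obtains an $O(\eps^2)$ bound. To recover the exact constant $2W(1+\mu_k(N))$ you would also have to track constants, which your sketch does not do.
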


\begin{theorem}[Hu, {\cite[Theorem~1.11]{Hu}}]
\label{thm:HuQuadratic}
There exists a universal constant $C>0$ such that every bounded
open convex planar domain $\Om$ with $\diam\Om=1$, and $N$ carries
the sectional-volume measure along the chosen diameter of $\Om$ satisfies
\[
 \mu_1(N)-CW(\Om)^2\le\mu_1(\Om)\le\mu_1(N).
\]
\end{theorem}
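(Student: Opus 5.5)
The lower bound in Theorem~\ref{thm:HuQuadratic} comes from a quantitative comparison of Rayleigh quotients. The upper bound $\mu_1(\Om)\le\mu_1(N)$ needs no new work. It is the min--max consequence of the lifting $Jv(x,y)=v(x)$ already noted in the introduction, since $J$ preserves both the energy and the $L^2$ norm with the sectional weight $h$.

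For the reverse direction, normalize coordinates so that the chosen diameter is $[0,1]\times\{0\}$ and write $\Om=\{(x,y):0<x<1,\ g_-(x)<y<g_+(x)\}$ with $h=g_+-g_-$. Here $g_+$ is concave, $g_-$ is convex, and $\|h\|_\infty\lesssim W:=W(\Om)$. Let $u$ be a first Neumann eigenfunction of $\Om$ and set $\bar u(x)=h(x)^{-1}\int_{g_-}^{g_+}u(x,y)\dd y$ to be its sectional average. Then $\int h\bar u\,dx=0$, so $\bar u$ is admissible for $\mu_1(N)$. The plan is to prove
\[
 \int_0^1 h|\bar u'|^2\dd x\le(1+CW^2)\int_\Om|\nabla u|^2
 \quad\text{and}\quad
 \int_0^1 h|\bar u|^2\dd x\ge(1-CW^2)\int_\Om u^2 .
\]
Together with $\mu_1(\Om)\le\mu_1(N)\le\pi^2$-type bounds, these give $\mu_1(N)\le\mu_1(\Om)(1+CW^2)$, which is the claim.

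The second inequality is routine. Poincaré on each vertical segment of length $h(x)\le CW$ gives $\int_\Om|u-\bar u|^2\le CW^2\int_\Om|\partial_y u|^2\le CW^2\mu_1(\Om)\int_\Om u^2$. By orthogonality of $u-\bar u$ to constants on each section, this yields the mass bound.

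The first inequality is the heart of the matter. Differentiating under the integral, $h\bar u'(x)=\int_{g_-}^{g_+}\partial_x u\dd y+g_+'(u(x,g_+)-\bar u)-g_-'(u(x,g_-)-\bar u)$. The boundary terms are the error. A naive estimate using $|g_\pm'|\lesssim W/\min(x,1-x)$ (from concavity) and a trace inequality only gives a linear $O(W)$ error, which is exactly Theorem~\ref{thm:HuLinear}. To gain the square, I would not estimate $\int h|\bar u'|^2$ against $\int|\partial_xu|^2$ directly. Instead I would use that $u-\bar u$ is nearly determined by the equation: $\partial_y^2 u\approx-\partial_x^2u-\mu u$ with Neumann data $\partial_y u=g_\pm'\partial_x u$ on the top and bottom. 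This suggests $u(x,y)-\bar u(x)\approx\bar u'(x)\,\phi(x,y)$ for an explicit correction $\phi$ that is quadratic in $y$ and of size $O(W\cdot|g'|)$. The cross term in the energy expansion then cancels to first order. The remaining terms are $O(W^2)\bigl(\int h|\bar u'|^2+\int h|\bar u''|^2\bigr)$, and the $\bar u''$ term is controlled by the one-dimensional equation satisfied by $\bar u$ up to errors.

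The main obstacle is the endpoints. Near $x=0,1$ the slopes $g_\pm'$ are unbounded relative to $h$, since sections can degenerate like a cone. Weighted integrals of the form $\int|g'|^2h^{-1}\cdots$ must therefore be shown to be $O(W^2)$ times the energy. I would handle this with a dyadic decomposition in $\dist(x,\{0,1\})$. In each dyadic piece I would rescale and apply Brunn--Minkowski concavity of $h^{1/(n-1)}$ (here $h$ itself is concave), which gives uniform control of $x|h'|/h$ and of $|g_\pm'|\le CW/x$. I would then sum using a weighted Hardy inequality for $\bar u$ near the endpoints, where $\bar u'$ vanishes in the weighted sense. Any remaining region where $h$ is tiny contributes little mass, because $\int h$ there is small compared with the total. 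This closing of the dyadic sum is where I expect to spend the most effort.
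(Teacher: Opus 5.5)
Your reduction to two inequalities, the upper bound via $J$, and the mass bound are fine. One slip: in fact $\mu_1(N)\ge\mu_1(\Om)\ge\pi^2$ by Payne--Weinberger, so what you need is a universal upper bound such as \eqref{eq:eigenvalueUpper}, not $\mu_1(N)\le\pi^2$.

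The gap is the energy inequality $\int_0^1h|\bar u'|^2\le(1+CW^2)\int_\Om|\nabla u|^2$. It carries all the content, and you support it only with a formal two-scale ansatz. You give no estimate for the remainder $u-\bar u-\bar u'\phi$. Controlling $\bar u''$ ``by the 1D equation satisfied by $\bar u$ up to errors'' is circular, since those errors are exactly the boundary terms in question.

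Concretely, write $\bar u'=P(\partial_xu)+B_u$ with $hB_u=g_+'(u(x,g_+)-\bar u)-g_-'(u(x,g_-)-\bar u)$, as in Lemma~\ref{lem:average}. The cross term $2\int_0^1hP(\partial_xu)B_u$ can be absorbed into $\int_\Om|\partial_yu|^2$ up to an $O(W^2)\|u\|^2$ error. Integrating the equation over $\{x'<x\}$ gives the flux identity $\int_{\Om_x}\partial_xu\,\dd y=-\mu_1(\Om)\int_0^xh\bar u\,\dd s$, and the Hardy argument of Lemma~\ref{lem:flux} then applies. The square $\int_0^1hB_u^2$ is the real obstruction: it is only bounded by $CW^2\int_0^1d(x)^{-2}\|\partial_yu\|^2_{L^2(\Om_x)}\dd x$. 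This is a singular weight on the transverse gradient of the \emph{unknown} eigenfunction at the ends. No global energy bound controls it, not even $\|\partial_yu\|_{L^2(\Om)}=O(W)$. You would need weighted or pointwise gradient bounds for Neumann eigenfunctions near the ends, uniform over all thin convex domains, and the proposal does not supply them.

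Your dyadic rescaling does not produce such bounds. For $d(x)\lesssim W^2$ the domain need not be thin relative to $d(x)$: for the diagonal rectangle of Section~\ref{sec:rectsharp}, $h(x)\approx x/W$ and one boundary slope is $\approx1/W$ there. So the expansion $u-\bar u\approx\bar u'\phi$ has no small parameter in that region. The smallness of the mass there is beside the point, because the error is an energy term with a singular weight. Even admissibility of $\bar u$ relies on regularity of $u$ at the ends. For a general $u\in H^1(\Om)$, e.g.\ a cut-off of $\log\log(1/|(x,y)|)$ at a smooth end of a thin ellipse, one gets $Pu\notin\mathcal V_h$.

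The missing idea is the one the paper uses; the statement is Theorem~\ref{thm:main} with $n=2$, $k=1$, $\eps=W(\Om)$. Never differentiate the sectional average of the unknown two-dimensional function, and compare inverses rather than eigenfunctions. For $g\in\mathcal H_\Om$, the one-dimensional solution $v=A_N^{-1}Pg$ has the explicit flux $hv'=-\int_0^xhPg$. Hence the moving-boundary error $r_f(z)=-\int_0^1hv'B_z\dd x$ places the factor $1/d(x)$ on $v'$, where the Hardy bound \eqref{eq:fluxEstimate} controls it. It involves $z$ only through the global norm $\|\nabla_yz\|_{L^2(\Om)}$. This gives $\|\nabla w\|\le L_n\eps\|g\|$ for $w=(T-S)g$. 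The exact orthogonality $\mathfrak a_\Om(w,Jv)=0$ then turns this into $\langle(T-S)g,g\rangle=\|\nabla w\|^2=O(\eps^2)$.

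In your Rayleigh-quotient language, the repair is to replace the trial function $\bar u$ by the mean-zero $\tilde v$ with $h\tilde v'=\int_{\Om_x}\partial_xu\,\dd y$. Testing the eigenvalue equation with $J\phi$ shows $\tilde v=\mu_1(\Om)A_N^{-1}\bar u$. Its energy satisfies $\int_0^1h|\tilde v'|^2\le\int_\Om|\partial_xu|^2$ with no error at all. The quadratic gain sits in the mass. With $w=(T-S)u$ one has $J\tilde v=u-\mu_1(\Om)w$, so $\|\tilde v\|^2_{L^2(h\dd x)}\ge\|u\|^2-2\mu_1(\Om)\|\nabla w\|^2\ge(1-512\,\mu_1(\Om)W^2)\|u\|^2$.
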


For the case of first Neumann eigenvalue, i.e. $k=1$, Theorem~\ref{thm:HuLinear} is due to \cite{Jer} for $n=2$, and \cite{CJK} for $n\geq2$.
The proof in {\cite{Hu} is different and applies to every eigenvalue index.

Theorem~\ref{thm:HuLinear} compares all indices, whereas
Theorem~\ref{thm:HuQuadratic} improves the error to quadratic order
for the first positive planar eigenvalue. A natural question is
whether this quadratic comparison extends to every fixed positive
index in arbitrary dimension.

We answer this question by comparing the Neumann Laplacian with a
weighted operator on a diameter segment. Normalize
$\operatorname{diam}(\Omega)=1$ and choose coordinates so that the
endpoints of a diameter are $(0,0)$ and $(1,0)$. Suppose that
\[
 \Omega\subset(0,1)\times B_{\varepsilon}^{n-1}(0),
\]
where $B_{\varepsilon}^{n-1}(0)$ is the open ball of radius
$\varepsilon$ in $\mathbb R^{n-1}$. For $0<x<1$, set
\[
 \Omega_x=\{y\in\mathbb R^{n-1}:(x,y)\in\Omega\},
 \qquad h(x)=|\Omega_x|.
\]
Let $N$ denote the interval $(0,1)$ equipped with the measure
$h(x)\,dx$. Its Neumann operator is associated with the quadratic form
\[
 v\longmapsto\int_0^1 h(x)|v'(x)|^2\,dx
\]
on the weighted Sobolev space defined in Section~\ref{sec:statement}.
The endpoint conditions are understood in the natural variational
sense, including when $h$ vanishes at an endpoint. We index both
spectra from zero, counting multiplicities:
\[
 0=\mu_0(\Omega)<\mu_1(\Omega)\le\cdots,
 \qquad 0=\mu_0(N)<\mu_1(N)\le\cdots.
\]

Theorem~\ref{thm:main} establishes, for every $n\ge2$ and $k\ge1$,
\[
 \frac{\mu_k(N)}{1+16^n\varepsilon^2\mu_k(N)}
 \le\mu_k(\Omega)\le\mu_k(N).
\]
Consequently,
\[
 0\le\mu_k(N)-\mu_k(\Omega)
 \le16^n\varepsilon^2\mu_k(N)^2
 \le C_{n,k}\varepsilon^2,
\]
where $C_{n,k}$ depends only on the dimension and the index.
No lower bound on the inradii of the sections is required. In particular,
sections may themselves collapse in some transverse directions.

The tubular formulation can also be expressed in terms of projective
width. If $\ell$ is the chosen diameter line and
\[
 R_\ell=\sup_{z\in\Omega}\operatorname{dist}(z,\ell),
\]
then
\[
 R_\ell\le W(\Omega)\le2R_\ell;
\]
see the remark following Theorem~\ref{thm:main}. Consequently, for a
domain of arbitrary diameter $D$, the corresponding diameter-based
weighted interval satisfies
\[
 0\le\mu_k(N)-\mu_k(\Omega)
 \le C_{n,k}\frac{W(\Omega)^2}{D^4}.
\]
Here $N$ is constructed on the chosen diameter segment of length $D$.
Thin planar rectangles show that the exponent two is optimal for
every fixed positive index.

The proof compares the inverses of the two operators on their mean-zero
spaces. A transverse Poincar\'{e} estimate and a weighted estimate for
the motion of sections first yield an energy error of order $\eps$.
Exact energy orthogonality then expresses the quadratic form of the
inverse difference as the square of that error, producing order
$\eps^2$. After establishing the geometric and residul estimates, Section~\ref{sec:inverse} proves the inverse comparison,
and the min--max principle in Section~\ref{sec:comparison} gives the eigenvalue bounds.
Section~\ref{sec:rectsharp} gives thin planar rectangles for which
the gap divided by the squared width tends to $k^2\pi^2>0$,
proving optimality of the exponent.

\textbf{AI usage declaration}. The main content of this article was written by the author, following author's earlier work {\cite{Hu}. GPT-6 was used to assist with the eigenvalue calculations for a specific weighted segment in the section on sharpness and to improve the language and presentation. The author reviewed the AI-generated content and takes full responsibility for the final manuscript.

\section{Statement and normalization}\label{sec:statement}

Let $n\ge2$ and put $m=n-1$. Let $\Om\subset\R^n$ be a bounded open
convex set with nonempty interior and $\diam\Om=1$. Choose diameter
endpoints in $\overline\Om$ and use a rigid motion to write them as
\[
 A=(0,0),\qquad B=(1,0),\qquad (x,y)\in\R\times\R^m.
\]
Suppose that, for some $\eps>0$,
\begin{equation}\label{eq:tube}
 \Om\subset (0,1)\times B^m_\eps(0).
\end{equation}
Define the sections and their volumes by
\[
 \Omega_x=\{y\in\R^m:(x,y)\in\Om\},\qquad
 h(x)=|\Omega_x|,\qquad 0<x<1.
\]
The interval $N=[0,1]$ is equipped with the measure $h(x)\dd x$.
Its Neumann operator is defined through the quadratic form
\begin{align}
 \mathfrak a_N(v,\phi)&=\int_0^1hv'\phi'\dd x,\label{eq:formN}\\
 \mathcal V_h&=\left\{v\in W^{1,2}_{\mathrm{loc}}(0,1):
        \int_0^1h\bigl(|v|^2+|v'|^2\bigr)\dd x<\infty\right\}.
        \label{eq:formdomain}
\end{align}
Here $v'$ is the weak derivative. We use the 
continuous representative whenever pointwise values are needed.
The endpoint conditions are natural conditions for the form; no
classical endpoint derivative is assumed when $h$ vanishes.
We index the two spectra, counting multiplicities, by
\[
 0=\mu_0(\Om)<\mu_1(\Om)\le\mu_2(\Om)\le\cdots,
 \qquad
 0=\lambda_0<\lambda_1\le\lambda_2\le\cdots,
\]
where $\lambda_k=\mu_k(N)$.

\begin{theorem}\label{thm:main}
Under the assumptions above, for every integer $k\ge1$,
\begin{equation}\label{eq:main}
 \boxed{\quad
 \frac{\lambda_k}{1+16^n\eps^2\lambda_k}
 \le\mu_k(\Om)\le\lambda_k.
 \quad}
\end{equation}
In particular,
\begin{equation}\label{eq:gap}
 0\le\lambda_k-\mu_k(\Om)
 \le16^n\eps^2\lambda_k^2
 \le C_{n,k}\eps^2,
\end{equation}
where the explicit constant
\begin{equation}\label{eq:constantnk}
 C_{n,k}=256^n\pi^4(k+1)^4
\end{equation}
is not intended to be sharp.
\end{theorem}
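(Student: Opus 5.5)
The upper bound $\mu_k(\Om)\le\lambda_k$ follows from min--max with the lift $Jv(x,y)=v(x)$. For $v\in\mathcal V_h$, Fubini gives $\int_\Om|\nabla Jv|^2=\int_0^1 h|v'|^2$ and $\int_\Om|Jv|^2=\int_0^1 h|v|^2$. Hence $J$ is an isometric embedding of the form domain of $N$ into $W^{1,2}(\Om)$, and it preserves both quadratic forms. Applying min--max to $J$ of the span of the first $k+1$ eigenfunctions of $N$ gives the upper bound.

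For the lower bound I will compare inverses on mean-zero spaces. Let $G_\Om$ be the inverse of the Neumann Laplacian on $L^2_0(\Om)$ and $G_N$ the inverse on $L^2_0(N,h)$. Let $P f(x)=h(x)^{-1}\int_{\Om_x}f\dd y$ be the sectional average, which is the adjoint of $J$ and maps $L^2_0(\Om)$ to $L^2_0(N,h)$. The target is the operator inequality
\[
\langle G_\Om f,f\rangle\le \langle G_N Pf,Pf\rangle_h+16^n\eps^2\|f\|^2 .
\]
From it, min--max for the compact operators gives $1/\mu_k(\Om)\le 1/\lambda_k+16^n\eps^2$, which is exactly \eqref{eq:main}. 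Since $\|Pf\|_h\le\|f\|$, the eigenvalue ordering transfers. Then \eqref{eq:gap} follows algebraically. The bound $\lambda_k\le 4^n\pi^2(k+1)^2$ (or similar) comes from a weighted-interval estimate using concavity of $h^{1/m}$ via Brunn--Minkowski, tested with cosines. I expect the constant bookkeeping here to be routine.

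To prove the operator inequality, set $u=G_\Om f$ and $v=G_N Pf$, and write $w=u-Jv$. Energy orthogonality gives
\[
\langle G_\Om f,f\rangle=\int|\nabla u|^2 ,\qquad \int\nabla u\cdot\nabla Jv=\int fJv=\int h\,Pf\,v=\int h|v'|^2 .
\]
Hence $\int|\nabla u|^2=\int h|v'|^2+\int|\nabla w|^2$, which is the exact orthogonality that makes the error quadratic. So I need $\int|\nabla w|^2\le 16^n\eps^2\|f\|^2$. Testing the equation for $u$ against $w$ gives
\[
\int|\nabla w|^2=\int f w-\int h v'(Pw)' .
\]
Here I use $\int fw=\int (f-JPf)w+\int h\,Pf\,Pw$, and the last term equals $\int h v'(Pw)'$ by the equation for $v$ tested with $Pw$. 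This reduces the claim to
\[
\int|\nabla w|^2=\int (f-JPf)(w-JPw)\le\|f\|\,\|w-JPw\|_{L^2(\Om)} .
\]
The transverse Poincar\'e inequality on convex sections of diameter at most $2\eps$ (Payne--Weinberger in each $\Om_x$) gives $\|w-JPw\|\le (2\eps/\pi)\|\nabla_y w\|\le(2\eps/\pi)\|\nabla w\|$. This yields $\int|\nabla w|^2\le (4/\pi^2)\eps^2\|f\|^2$, which is even better than the stated $16^n$.

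The main obstacle is justifying these manipulations. I must check that $Pw\in\mathcal V_h$ and that it is a legitimate test function for the weak equation of $v$, even when $h$ vanishes at an endpoint. In particular $(Pw)'$ involves the motion of the sections: $\frac{d}{dx}\int_{\Om_x}w$ picks up boundary terms from moving $\partial\Om_x$. I would handle this by approximating $\Om$ by smooth convex domains and using the identity $\int h\,\phi'\,(Pw)'=\int_\Om \nabla(J\phi)\cdot\nabla w$ only in the weak form $\int h\,Pf\,\phi=\int f J\phi$. That is, I test the $N$-equation with $P w$ only through $J$-duality, so $\int h v'(Pw)'$ should be rewritten as $\int_\Om\nabla Jv\cdot\nabla w$ plus a section-motion error. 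This is where the paper's weighted section-motion estimate would enter. If the exact identity fails, the error term must be bounded by $C^n\eps\|\nabla w\|\,\|f\|$ using $|\partial_x$ of boundary$|\lesssim\eps/\min(x,1-x)$ from convexity together with Hardy-type weighted bounds for $v'$. This is the likely source of the $16^n$, and I would attack it first.
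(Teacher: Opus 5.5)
Your upper bound and your overall architecture coincide with the paper's: the inverse comparison on mean-zero spaces, $w=u-Jv$, and the exact orthogonality giving $\langle G_\Om f,f\rangle-\langle G_NPf,Pf\rangle_h=\|\nabla w\|^2$. The gap is that the central estimate is missing, and the step you use in its place is incorrect. In writing $\int|\nabla w|^2=\int fw-\int hv'(Pw)'$ you replace $\int_\Om\nabla Jv\cdot\nabla w=\int_0^1hv'\,P(w_x)\dd x$ (which is $0$ by your own orthogonality) by $\int_0^1hv'(Pw)'\dd x$. These are not equal. Differentiating a sectional average picks up the motion of the sections: $(Pw)'=P(w_x)+B_w$ with $hB_w=\int_{\partial\Om_x}V\,(w-Pw)\dd\sigma$, where $V$ is the normal velocity of $\partial\Om_x$. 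Since $h'=\int_{\partial\Om_x}V\dd\sigma$ and $h$ vanishes at both ends, $V\not\equiv0$. The correct identity (for smooth $w$, then by density) is
\[
 \int_\Om|\nabla w|^2=\int_\Om(f-JPf)(w-JPw)+\int_0^1hv'B_w\dd x .
\]
So the bound $4\eps^2/\pi^2$ is not proved. The whole difficulty is to show $\bigl|\int_0^1hv'B_w\dd x\bigr|\le C^n\eps\|f\|\,\|\nabla_yw\|$, and you defer exactly this. Testing the $v$-equation with $Pw$ is also not available in general. $B_w$ is controlled only with a factor $1/d(x)$, so $Pw$ need not lie in $\mathcal V_h$. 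The paper instead integrates by parts against $F=-hv'$ on $[\delta,1-\delta]$ and lets $\delta\to0$, which puts the weight $1/d$ on $v'$.

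The tools you name for this estimate do not suffice as stated. A velocity bound $|V|\lesssim\eps/d(x)$ must be paired with a trace inequality $\int_{\partial K}|q-q_K|\dd\sigma\le C_K\sqrt{|K|}\,\|\nabla q\|_{L^2(K)}$. For $n\ge3$ the constant $C_K$ depends on the shape of the section; for a $\delta\times L$ rectangle it is of order $L/\delta$. The constant would therefore blow up when sections degenerate.

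The missing ideas are the following.
(i) The supporting hyperplanes at $A$ and $B$ give the sharper bound $|V|\le(y\cdot\nu_{\Om_x})/d(x)$.
(ii) The divergence theorem applied to $y\,|q-q_K|$ on a convex $K$ with $0\in\overline K$ and $K\subset B^m_\eps(0)$ gives a weighted trace inequality with shape-independent constant: $\int_{\partial K}(y\cdot\nu_K)|q-q_K|\dd\sigma\le b_n\eps\sqrt{|K|}\,\|\nabla q\|_{L^2(K)}$, where $b_n=1+(n-1)2^{n/2}$.
(iii) A Hardy bound $\int_0^1h|v'|^2d(x)^{-2}\dd x\le4^n\int_0^1h|Pf|^2\dd x$. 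This requires the natural endpoint conditions $(hv')(0)=(hv')(1)=0$, so that $hv'=-\int_0^xh\,Pf$. It also requires the volume comparison $\int_0^xh\le2^{n-1}x\,h(x)$ for $x\le1/2$, which comes from homotheties toward $B$. Finally, one applies the one-dimensional Hardy inequality after the substitution $t=\int_0^xh$.

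Together these give $\bigl|\int hv'B_w\bigr|\le2^nb_n\eps\|f\|\,\|\nabla_yw\|$. Hence $\|\nabla w\|\le(2^{n/2}+2^nb_n)\eps\|f\|\le4^n\eps\|f\|$, which is the source of $16^n$.

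A smaller point: the stated $C_{n,k}$ needs $\lambda_k\le4^n\pi^2(k+1)^2$. Cosines on all of $[0,1]$ do not give this directly, because $h$ degenerates at the ends. What works is $k+1$ disjointly supported Dirichlet sine bumps on $[1/4,3/4]$, where $h\ge4^{1-n}\sup h$.
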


\begin{remark*}[Relation to the width]
In \cite{Hu}, the projective width is defined by
\begin{equation}\label{eq:width}
 W(\Om)=\min_{e\in\mathbb S^{n-1}}
             \diam\bigl(\pi_{e^\perp}\Om\bigr).
\end{equation}
If $\ell$ is any diameter line and
$R_\ell=\sup_{X\in\Om}\dist(X,\ell)$, then
\[
 R_\ell\le W(\Om)\le2R_\ell.
\]
For the first inequality, the triangle formed by the diameter
endpoints and $X$ has minimum planar width $\dist(X,\ell)$:
its longest side is the diameter, and its shortest altitude is
the altitude from $X$. Every transverse projection of $\Om$ has
diameter at least this planar width, by projecting further onto
a direction in the triangle's plane. The second inequality
follows by projecting perpendicular to $\ell$.
Thus Theorem~\ref{thm:main} applies with $\eps=W(\Om)$. 
For a general diameter $D$, dilation gives
\[
 0\le\mu_k(N)-\mu_k(\Om)
 \le C_{n,k}\frac{W(\Om)^2}{D^4}.
\]
In $n\ge3$, this projective width controls all transverse
directions and is different from the width of a containing slab.
\end{remark*}

\section{Estimates on convex sections}

We first establish a Poincar\'{e} estimate with a dimension-dependent
constant by an elementary argument. This avoids any dependence on a
section's shape.

\begin{lemma}\label{lem:poincare}
Let $K\subset\R^m$ be a bounded open convex set with positive volume, and
let $\ell=\diam K$. For $q\in H^1(K)$, write
$q_K=|K|^{-1}\int_Kq$. Then
\begin{equation}\label{eq:poincareK}
 \|q-q_K\|_{L^2(K)}
 \le 2^{(m-1)/2}\ell\,\|\nabla q\|_{L^2(K)}.
\end{equation}
If, in addition, $0\in\overline K$ and $K\subset B^m_\eps(0)$, and
$\nu_K$ is the outward unit normal, put
\[
 n=m+1,\qquad p_n=2^{n/2},\qquad b_n=1+(n-1)p_n.
\]
Then for $q$ smooth near $\overline K$,
\begin{equation}\label{eq:weightedtrace}
 \int_{\partial K}(y\cdot\nu_K)|q-q_K|\dd\sigma
 \le b_n\eps\sqrt{|K|}\,\|\nabla q\|_{L^2(K)}.
\end{equation}
Here and below, when $m=1$, boundary integrals use counting measure at
the two interval endpoints.
\end{lemma}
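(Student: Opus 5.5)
For \eqref{eq:poincareK} I will use the elementary segment-averaging argument, which avoids the sharp Payne--Weinberger constant. For $x,z\in K$ write
\[
 q(x)-q(z)=\int_0^1\nabla q(x+t(z-x))\cdot(z-x)\dd t ,
\]
so that $|q(x)-q(z)|^2\le\ell^2\int_0^1|\nabla q(x+t(z-x))|^2\dd t$. Since
\[
 2|K|\,\|q-q_K\|_{L^2(K)}^2=\iint_{K\times K}|q(x)-q(z)|^2\dd x\dd z ,
\]
it is enough to bound the double integral of $|\nabla q(x+t(z-x))|^2$. By symmetry in $x\leftrightarrow z$ I may restrict to $t\in[0,1/2]$ and double. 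For fixed $z$ and $t\le1/2$, the map $x\mapsto w=x+t(z-x)$ has Jacobian $(1-t)^m\ge2^{-m}$, and its image lies in $K$ by convexity. Hence
\[
 \int_K|\nabla q(x+t(z-x))|^2\dd x\le2^m\|\nabla q\|_{L^2(K)}^2 .
\]
Integrating in $z$ and in $t\in[0,1/2]$, and then doubling, gives
\[
 2|K|\,\|q-q_K\|^2\le\ell^2\cdot2\cdot\tfrac12\cdot2^m|K|\,\|\nabla q\|^2 ,
\]
that is, $\|q-q_K\|^2\le2^{m-1}\ell^2\|\nabla q\|^2$. This is \eqref{eq:poincareK}. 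When $m=1$ the same computation is the one-dimensional bound.

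For \eqref{eq:weightedtrace} I will turn the boundary integral into a volume integral by the divergence theorem applied to the field $y\,|q-q_K|$. One can smooth $|\cdot|$, or use $\sqrt{(q-q_K)^2+\delta}$ and let $\delta\to0$. The key point is that $0\in\overline K$ and convexity give $y\cdot\nu_K\ge0$ on $\partial K$, since $\nu_K$ is the normal of a supporting hyperplane at a boundary point and $0$ lies on the inner side. The weight is therefore nonnegative, and
\[
 \int_{\partial K}(y\cdot\nu_K)|q-q_K|\dd\sigma=\int_K\Bigl(m|q-q_K|+y\cdot\nabla|q-q_K|\Bigr)\dd y .
\]
The second term is at most $\eps\sqrt{|K|}\,\|\nabla q\|_{L^2(K)}$, using $|y|<\eps$ and Cauchy--Schwarz. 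For the first term, Cauchy--Schwarz and \eqref{eq:poincareK} give
\[
 m\|q-q_K\|_{L^1}\le m\sqrt{|K|}\,2^{(m-1)/2}\ell\,\|\nabla q\|_{L^2(K)} ,
\]
and $\ell\le2\eps$ since $K\subset B^m_\eps$. So the first term is bounded by $(n-1)2^{(n-2)/2}\cdot2\eps\sqrt{|K|}\,\|\nabla q\|=(n-1)2^{n/2}\eps\sqrt{|K|}\,\|\nabla q\|=(n-1)p_n\,\eps\sqrt{|K|}\,\|\nabla q\|$. Adding the two terms gives the constant $1+(n-1)p_n=b_n$.

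The main obstacle is rigor at two points. First, the divergence theorem must be justified for a merely Lipschitz convex domain with the nonsmooth function $|q-q_K|$. I will handle this by the $\delta$-regularization, using that convex domains are Lipschitz so the Gauss--Green formula holds, and passing to the limit by dominated convergence. Second, the case $m=1$ has to be checked separately. There $K=(a,b)$ with $a\le0\le b$, the boundary term is $b|q(b)-q_K|+|a|\,|q(a)-q_K|$, and this equals $\int_a^b\bigl(|q-q_K|+y\,(|q-q_K|)'\bigr)\dd y$ by the fundamental theorem of calculus. The argument above then goes through with $m=1$.
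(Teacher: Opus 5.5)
Your proposal is correct and follows the paper's proof essentially step by step. It uses the same variance identity with segment averaging, the same reduction to $t\in[0,1/2]$ where the Jacobian bound $(1-t)^m\ge2^{-m}$ gives the factor $2^m$, and the same divergence theorem for $y\,|q-q_K|$, combined with Cauchy--Schwarz, \eqref{eq:poincareK} and $\ell\le2\eps$, to obtain $b_n=1+(n-1)p_n$. One point to add, which the paper states explicitly: your fundamental-theorem-of-calculus step is written for smooth $q$, so you should say that \eqref{eq:poincareK} extends to all $q\in H^1(K)$ by density.
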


\begin{proof}
For smooth $q$, the variance identity is
\[
 \int_K|q-q_K|^2
 =\frac1{2|K|}\int_K\int_K|q(y)-q(z)|^2\dd y\dd z.
\]
By the fundamental theorem of calculus, convexity, and
Cauchy--Schwarz,
\[
 |q(y)-q(z)|^2
 \le\ell^2\int_0^1|\nabla q((1-t)y+tz)|^2\dd t.
\]
For $0\le t\le1/2$, fix $z$ and make the change of variables
$w=(1-t)y+tz$. Since $(1-t)K+tz\subset K$,
\[
 \int_K\int_K|\nabla q((1-t)y+tz)|^2\dd y\dd z
 \le2^m|K|\int_K|\nabla q|^2.
\]
For $1/2\le t\le1$, exchange $y$ and $z$ to obtain the same estimate.
Integration in $t$ proves
\[
 \int_K|q-q_K|^2\le2^{m-1}\ell^2\int_K|\nabla q|^2.
\]
Density proves \eqref{eq:poincareK} for $H^1(K)$.

For the second assertion, convexity and $0\in\overline K$ imply
$y\cdot\nu_K\ge0$ almost everywhere on $\partial K$.
Apply the divergence theorem to the vector field
$y|q-q_K|$. The absolute value can be justified by a smooth
approximation. It gives
\begin{align*}
 \int_{\partial K}(y\cdot\nu_K)|q-q_K|\dd\sigma
 &=\int_K\left(m|q-q_K|+y\cdot\nabla|q-q_K|\right)\dd y\\
 &\le m\sqrt{|K|}\,\|q-q_K\|_2
       +\eps\sqrt{|K|}\,\|\nabla q\|_2.
\end{align*}
Since $\ell\le2\eps$, \eqref{eq:poincareK} gives
\[
 \|q-q_K\|_2\le2^{(m+1)/2}\eps\|\nabla q\|_2
             =p_n\eps\|\nabla q\|_2.
\]
Substitution yields \eqref{eq:weightedtrace}.
\end{proof}

\section{Geometry and differentiation of sectional averages}

The diameter condition implies that the projection of $\Om$ onto the
$x$-axis is $(0,1)$ and that the endpoint sections of $\overline\Om$
are the singletons $A$ and $B$. Each $\Omega_x$ is a bounded open convex set
of positive $m$-dimensional volume, and $0\in\overline{\Omega_x}$.
The latter fact follows from the inclusion $[A,B]\subset\overline\Om$.

Define
\begin{equation}\label{eq:JP}
 (J\phi)(x,y)=\phi(x),\qquad
 (Pz)(x)=\frac1{h(x)}\int_{\Omega_x}z(x,y)\dd y.
\end{equation}
Then $J:L^2(h\dd x)\to L^2(\Om)$ is an isometry,
$P=J^*$, and $PJ=I$. In particular, $JP$ is the orthogonal
projection onto the subspace of functions that are constant on each
section. Fubini's theorem and Lemma~\ref{lem:poincare} give
\begin{equation}\label{eq:transverseP}
 \|z-JPz\|_{L^2(\Om)}
 \le p_n\eps\|\nabla_yz\|_{L^2(\Om)},
 \qquad z\in H^1(\Om).
\end{equation}

Write $d(x)=\min\{x,1-x\}$. The next lemma is the geometric ingredient
that replaces the upper and lower boundary graphs in the planar proof in {\cite{Hu}}.

\begin{lemma}\label{lem:average}
For $z$ smooth on $\overline\Om$, the sectional
average $Pz$ is locally absolutely continuous on $(0,1)$. For almost
every $x$, it satisfies
\begin{equation}\label{eq:averageDerivative}
 (Pz)'(x)=P(z_x)(x)+B_z(x),
\end{equation}
where
\begin{equation}\label{eq:Bbound}
 |h(x)B_z(x)|
 \le\frac{b_n\eps}{d(x)}\sqrt{h(x)}
       \left(\int_{\Omega_x}|\nabla_yz|^2\dd y\right)^{1/2}.
\end{equation}
\end{lemma}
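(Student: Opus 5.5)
The plan is to differentiate the sectional integral $x\mapsto\int_{\Omega_x}z(x,y)\dd y$ by a transport (Reynolds) formula for the moving convex section $\Omega_x$. The boundary velocity is then controlled purely by convexity and the fact that the segment $[A,B]$ lies in $\overline\Om$. The resulting boundary term is exactly the left-hand side of \eqref{eq:weightedtrace}, which Lemma~\ref{lem:poincare} bounds.

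\textbf{Step 1 (support functions).} For $\nu\in\mathbb S^{m-1}$ let $s_\nu(x)=\sup_{y\in\Omega_x}y\cdot\nu$. Convexity of $\Om$ makes $x\mapsto s_\nu(x)$ concave on $(0,1)$. Since $0\in\overline{\Omega_x}$, we have $s_\nu\ge0$. Since the endpoint sections are the singletons $A$ and $B$, $s_\nu$ tends to $0$ at $x=0$ and $x=1$.

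For a nonnegative concave function $f$ on $[0,1]$, comparing with the chords to the endpoints gives
\[
 -\frac{f(x)}{1-x}\le f'_\pm(x)\le \frac{f(x)}{x},
 \qquad\text{hence}\qquad |f'_\pm(x)|\le \frac{f(x)}{d(x)}.
\]
Equivalently, $\Om$ contains the cones from $A$ and from $B$ over $\Omega_x$. This gives the inclusions $\Omega_{x'}\supset (x'/x)\Omega_x$ for $x'<x$ and $\Omega_{x'}\supset\frac{1-x'}{1-x}\Omega_x$ for $x'>x$. Together with the outer bound from concavity, these show that $\Omega_x$ moves in a locally Lipschitz way in the Hausdorff metric.

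\textbf{Step 2 (transport formula).} The first variation formula for convex bodies states that, for a.e.\ $x$,
\[
 \frac{d}{dx}\int_{\Omega_x}z\dd y=\int_{\Omega_x}z_x\dd y+\int_{\partial\Omega_x}z\,V\dd\sigma,
\]
where the normal velocity at a boundary point with outer normal $\nu$ is $V=\partial_x s_\nu(x)$. Taking $z\equiv1$ gives $h'=\int_{\partial\Omega_x}V\dd\sigma$. I will derive this from one-sided difference quotients, using the Lipschitz dependence from Step~1 and the fact that $z$ is smooth on $\overline\Om$. The same estimates give local absolute continuity of $h$ and of $hPz$. Since $h>0$ on $(0,1)$, they also give local absolute continuity of $Pz$.

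\textbf{Step 3 (quotient rule).} Then for a.e.\ $x$,
\[
 (Pz)'=\frac{(hPz)'-h'Pz}{h}=P(z_x)+B_z,
 \qquad hB_z=\int_{\partial\Omega_x}(z-Pz)\,V\dd\sigma .
\]
At a boundary point $y$ with normal $\nu$ we have $y\cdot\nu=s_\nu(x)$. Step~1 then gives $|V|\le (y\cdot\nu)/d(x)$, so
\[
 |hB_z|\le \frac1{d(x)}\int_{\partial\Omega_x}(y\cdot\nu)\,|z-Pz|\dd\sigma .
\]
Now apply \eqref{eq:weightedtrace} with $K=\Omega_x$ and $q=z(x,\cdot)$. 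Here $q_K=Pz(x)$, $K\subset B^m_\eps(0)$ and $0\in\overline K$. This yields \eqref{eq:Bbound}. When $m=1$ the boundary integrals become sums over the two endpoints of $\Omega_x$, and the same argument applies verbatim.

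\textbf{Main obstacle.} The hard step is the rigorous justification of Step~2 for a general convex, possibly nonsmooth, family of sections. One issue is that $\partial_x s_\nu$ exists only a.e.\ in $x$ for each fixed $\nu$. Another is that boundary normals are defined only $\sigma$-a.e. My first approach is to work with one-sided difference quotients and the symmetric-difference sets $\Omega_{x+\delta}\triangle\Omega_x$. I will express their signed contribution through the radial (gauge) parametrization of $\partial\Omega_x$ from an interior point, where the boundary radius is locally Lipschitz in $(x,\text{direction})$. Dominated convergence then gives the limit for a.e.\ $x$. If this becomes messy, the fallback is to approximate $\Om$ by smooth strictly convex domains with the same diameter endpoints, prove the identity there, and pass to the limit. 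This is legitimate because the constants in \eqref{eq:Bbound} are uniform.
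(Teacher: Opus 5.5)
Your proposal is correct. Steps~1 and~3 are the paper's argument in different notation, but you justify Step~2 by a genuinely different route.

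The paper works with the outer normal $(\nu_x,\nu_y)$ of $\partial\Om$, sets $V=-\nu_x/|\nu_y|$, and reads off $-p/(1-x)\le V\le p/x$ from the supporting-plane inequalities at $A$ and $B$. Put $\hat\nu=\nu_y/|\nu_y|$. Then $-\nu_x/|\nu_y|$ is a supergradient of the concave function $x\mapsto s_{\hat\nu}(x)$, and it equals $\partial_x s_{\hat\nu}$ wherever $\partial\Om$ has a unique normal. So this is exactly your chord estimate, and the quotient rule followed by \eqref{eq:weightedtrace} is the same as in the paper.

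For the transport formula, however, the paper never differentiates pointwise in $x$. It applies the divergence theorem on $\Om$ to $\psi(x)z(x,y)e_x$ with $\psi\in C_c^1(0,1)$, and disintegrates the boundary integral by the coarea formula on $\partial\Om$, whose tangential Jacobian $|\nu_y|$ is positive by the supporting-plane inequalities. This yields \eqref{eq:transport} as an identity of distributions. The bound $\int_{\partial\Omega_x}|V|\dd\sigma\le mh(x)/d(x)$ then makes these distributional derivatives locally integrable, so absolute continuity and the a.e.\ formula follow at once. This sidesteps exactly the obstacle you flag: each $s_\nu$ is differentiable only off a $\nu$-dependent null set. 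The a.e.\ existence of the normal on $\partial\Om$, combined with coarea, handles all directions simultaneously.

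Your route also works: Hausdorff--Lipschitz motion of the sections, the radial function $\rho(x,\theta)$ about a common interior point, Rademacher and Fubini in $(x,\theta)$, and the identification $z\rho^{m-1}\partial_x\rho\dd\theta=zV\dd\sigma$. It gives slightly more, namely local Lipschitz continuity of $h$, $hPz$ and $Pz$, at the cost of that extra bookkeeping. The smoothing fallback is the least economical option. The approximants would have to keep $A,B$ in their closures and stay inside $\{0<x<1\}\times B^m_{\eps+o(1)}$. You would then still need to pass to the limit in the distributional form of \eqref{eq:averageDerivative}.
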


\begin{proof}
At almost every lateral boundary point $(x,y)$ of $\Om$, write the
outward unit normal as $\nu=(\nu_x,\nu_y)$. The supporting-plane
inequalities at the two endpoints give
\[
 -x\nu_x-y\cdot\nu_y\le0,
 \qquad (1-x)\nu_x-y\cdot\nu_y\le0.
\]
At a point with $0<x<1$, these inequalities rule out $\nu_y=0$.
For almost every such point, the outward normal of the slice is
\[
 \nu_{\Omega_x}=\frac{\nu_y}{|\nu_y|}.
\]
Set
\[
 p=y\cdot\nu_{\Omega_x},\qquad
 V=-\frac{\nu_x}{|\nu_y|}.
\]
The supporting-plane inequalities imply
\begin{equation}\label{eq:velocity}
 p\ge0,\qquad
 -\frac{p}{1-x}\le V\le\frac{p}{x},
 \qquad |V|\le\frac{p}{d(x)}.
\end{equation}

For clarity, the differentiation formulas can be obtained directly
in weak form, even when the boundary is not smooth. Put
\[
 M_z(x)=\int_{\Omega_x}z(x,y)\dd y.
\]
For $\psi\in C_c^1(0,1)$, apply the divergence theorem on $\Om$ to
$\psi(x)z(x,y)e_x$, where $e_x=(1,0)$ is the unit normal vector. The boundary coarea formula has tangential
Jacobian $|\nu_y|$. It follows that
\[
 \int_0^1\psi'M_z\dd x
 +\int_0^1\psi\int_{\Omega_x}z_x\dd y\dd x
 =-\int_0^1\psi\int_{\partial \Omega_x}Vz\dd\sigma\dd x.
\]
Consequently, in distributions on $(0,1)$,
\begin{equation}\label{eq:transport}
 M_z'=\int_{\Omega_x}z_x\dd y+
              \int_{\partial \Omega_x}Vz\dd\sigma,
 \qquad
 h'=\int_{\partial \Omega_x}V\dd\sigma.
\end{equation}
The second equality is obtained by letting $z=1$ be constant function on $\Omega$.

These distributional derivatives are locally integrable. Indeed,
\eqref{eq:velocity} and the divergence theorem in $\Omega_x$ give
\begin{equation}\label{eq:Vintegrable}
 \int_{\partial \Omega_x}|V|\dd\sigma
 \le\frac1{d(x)}\int_{\partial \Omega_x}p\dd\sigma
 =\frac{m h(x)}{d(x)}.
\end{equation}
Also $h$ is locally bounded below by a positive number. To see this,
fix one section of positive volume and use homotheties toward $A$
and $B$, which are contained in $\Om$ by convexity. Therefore
$M_z/h=Pz$ is locally absolutely continuous. Differentiating this
quotient in \eqref{eq:transport} gives
\begin{equation}\label{eq:Bformula}
 h B_z=\int_{\partial \Omega_x}V\bigl(z-Pz\bigr)\dd\sigma.
\end{equation}
Finally, apply \eqref{eq:velocity} and \eqref{eq:weightedtrace} to
$q(y)=z(x,y)$ on $\Omega_x$:
\[
 |h B_z|
 \le\frac1{d(x)}\int_{\partial \Omega_x}p|z-Pz|\dd\sigma
 \le\frac{b_n\eps}{d(x)}\sqrt h
                \|\nabla_yz(x,\cdot)\|_{L^2(\Omega_x)}.
\]
This is \eqref{eq:Bbound}.

\end{proof}

\section{The exsitence of inverse operator}

We first justify the inverse used below. Put
$M_h=\int_0^1h\dd x=|\Om|$. Convexity implies that $h$ is bounded
above and bounded away from zero on each compact subinterval of
$(0,1)$. Thus $\mathcal V_h$, where
\[
\mathcal V_h=\left\{v\in W^{1,2}_{\mathrm{loc}}(0,1):
\int_0^1h\bigl(|v|^2+|v'|^2\bigr)\dd x<\infty\right\}.
\]
 equipped with
\[
 \|v\|_{\mathcal V_h}^2=\int_0^1h(|v|^2+|v'|^2)\dd x,
\]
is a Hilbert space.

For $v\in\mathcal V_h$, local weak differentiation and Fubini's
theorem give $Jv\in H^1(\Om)$ and
\begin{equation}\label{eq:isometryH1}
 \|Jv\|_{H^1(\Om)}^2
 =\int_0^1h(|v|^2+|v'|^2)\dd x.
\end{equation}
Let
\[
 \mathcal H_N=\left\{f\in L^2(h\dd x):\int_0^1hf\dd x=0\right\},
 \qquad \mathcal V_h^0=\mathcal V_h\cap\mathcal H_N.
\]
Applying Lemma~\ref{lem:poincare} to $Jv$ on the diameter-one
domain $\Om$ yields
\begin{equation}\label{eq:weightedPoincare}
 \int_0^1h|v|^2\dd x\le2^{n-1}\int_0^1h|v'|^2\dd x,
 \qquad v\in\mathcal V_h^0.
\end{equation}
Hence $\mathfrak a_N$ is coercive on the closed subspace
$\mathcal V_h^0$, and its energy norm
$\|v\|_a^2=\int_0^1h|v'|^2\dd x$ is equivalent to the full form
norm there. For every $f\in\mathcal H_N$, Lax--Milgram therefore
gives a unique $v\in\mathcal V_h^0$ satisfying;
\[
	\int_0^1hv'\phi'\dd x=\int_0^1hf\phi\dd x,
	\qquad \phi\in\mathcal V_h.
\]
see \cite[Chapter~5]{Brezis}.
Testing with arbitrary $\phi\in\mathcal V_h$ is legitimate, since $\int_0^1hf=0$.

This defines the bounded solution operator $A_N^{-1}$ on
$\mathcal H_N$. It is positive, injective, and self-adjoint. Rellich compactness on the bounded
convex Lipschitz domain $\Om$, together with \eqref{eq:isometryH1},
gives the compact embedding
$\mathcal V_h\hookrightarrow L^2(h\dd x)$; see
\cite[Chapter~9]{Brezis}. Consequently $A_N^{-1}$ is compact.
The same argument on $H^1(\Om)\cap L^2_0(\Om)$ defines the
compact positive self-adjoint inverse $A_\Om^{-1}$.
All inverses here are restricted to the mean-zero spaces, since
constants form the Neumann kernel.

\begin{lemma}\label{lem:flux}
Let $f\in L^2(h\dd x)$ satisfy $\int_0^1hf\dd x=0$.
There exists a unique $v\in\mathcal V_h$ with
$\int_0^1hv\dd x=0$ solving
\begin{equation}\label{eq:weak1d}
 \int_0^1hv'\phi'\dd x=\int_0^1hf\phi\dd x,
 \qquad \phi\in\mathcal V_h.
\end{equation}
Then
\begin{equation}\label{eq:fluxIdentity}
 hv'=-F,\qquad
 F(x)=\int_0^xh(s)f(s)\dd s
      =-\int_x^1h(s)f(s)\dd s,
\end{equation}
and
\begin{equation}\label{eq:fluxEstimate}
 \int_0^1h\frac{|v'|^2}{d(x)^2}\dd x
 \le4^n\int_0^1h|f|^2\dd x.
\end{equation}
\end{lemma}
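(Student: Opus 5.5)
Existence and uniqueness of $v$ come directly from the Lax--Milgram construction just given on $\mathcal V_h^0$, together with the remark that testing against all of $\mathcal V_h$ is allowed because $\int_0^1 hf=0$. So the work is in the flux identity and the weighted estimate.

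\textbf{Flux identity.} I would first test \eqref{eq:weak1d} with $\phi\in C_c^\infty(0,1)\subset\mathcal V_h$. In distributions this gives $(hv')'=-hf$ on $(0,1)$. Since $hf\in L^1(0,1)$ (Cauchy--Schwarz, using $\int h<\infty$), we get $hv'=c-F$ for a constant $c$, with $F(x)=\int_0^x hf$; the two expressions for $F$ agree because $hf$ has mean zero.

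To show $c=0$, I would test with the cutoff $\phi_\delta$ that equals $1$ on $[0,\tfrac12]$ and $0$ near $1$, interpolating linearly on $[1-2\delta,1-\delta]$. This function lies in $\mathcal V_h$. The right side $\int hf\phi_\delta$ tends to $\int_0^1 hf\cdot\mathbf 1=0$ by dominated convergence. The left side equals $-\delta^{-1}\int_{1-2\delta}^{1-\delta}(c-F)\,dx$, which tends to $-(c-F(1^-))=-c$, because $F$ is continuous up to $1$ with $F(1)=0$. Hence $c=0$.

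\textbf{Weighted estimate.} Substituting $hv'=-F$ gives
\[
\int_0^1 h\frac{|v'|^2}{d^2}\,dx=\int_0^1\frac{F^2}{h\,d^2}\,dx.
\]
I split at $x=\tfrac12$. On $(0,\tfrac12)$, Cauchy--Schwarz gives $F(x)^2\le H(x)\int_0^x h f^2$, where $H(x)=\int_0^x h$. The key geometric input is a growth bound on $H$. By concavity of $h^{1/m}$ (Brunn--Minkowski, from convexity of $\Om$) and $h(0^+)\ge0$, the function $h(s)/s^m$ is nonincreasing on $(0,x]$. Therefore
\[
H(x)=\int_0^x h\ge \frac{h(x)}{x^m}\int_0^x s^m\,ds=\frac{x\,h(x)}{n},
\]
which goes the wrong way for our purpose: it bounds $H$ from below. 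What is actually needed is an upper bound $H(x)\le C\,x\,h(x)$ for $x\le\tfrac12$, and also a tail bound on $\int_x^{1/2} \frac{ds}{s^2h(s)}$.

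The cleaner route is a Hardy-type inequality. I would write
\[
\int_0^{1/2}\frac{F^2}{h x^2}\,dx\le\int_0^{1/2} h f^2(x)\,w(x)\,dx,
\qquad w(t)=\int_t^{1/2}\frac{H(s)}{h(s)s^2}\,ds .
\]
This follows by expanding $F^2\le H(s)\int_0^s hf^2$ and applying Fubini. So it suffices to show $H(s)\le C_n\,s\,h(s)$ for $s\le\tfrac12$, which makes $w(t)\le C_n\log(1/(2t))$.

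That logarithm is a problem, so I would instead use the sharper Cauchy--Schwarz with a weight. For $s\le\tfrac12$, the needed upper bound comes from $h(s)\ge h(\tfrac12)(2s)^m$ (concavity) together with $h\le h_{\max}$, with $h_{\max}$ controlled by $h(\tfrac12)$ up to $2^m$ using homotheties toward $A$ and $B$. The exact weighting I would try is $F(x)^2\le\bigl(\int_0^x h s^{-1}\bigr)\bigl(\int_0^x s\,h f^2\bigr)$, followed by the Fubini step. Here $\int_0^x h/s\le C h_{\max}x^{\,\min(m,1)}$, and then I would integrate against $1/(h x^2)$ with $h(x)\gtrsim h_{\max}x^m$.

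\textbf{Main obstacle.} Getting the $d^{-2}$ weight to close with the constant $4^n$ and no logarithm is the hardest step. I expect it to rest on the two-sided Brunn--Minkowski comparisons $h(x)\ge 2^{-m}h_{\max}\,(2x)^m$ and $h_{\max}\le 2^m h(\tfrac12)$. The symmetric half $(\tfrac12,1)$ is treated with $F=-\int_x^1 hf$.
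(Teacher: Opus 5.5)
Your existence/uniqueness step and your derivation of $hv'=-F$ are fine. The cutoff $\phi_\delta$, which should equal $1$ on $[0,1-2\delta]$, is a valid substitute for the paper's test functions $\phi(x)=x$ and $\phi(x)=1-x$.

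\textbf{The gap.} The weighted estimate \eqref{eq:fluxEstimate} is left open, and the route you finally propose cannot close it. The bound $\int_0^x h(s)s^{-1}\dd s\le C h_{\max}x^{\min(m,1)}$ is false. For the trapezoidal weight $\rho_b$ of Section~\ref{sec:rectsharp}, with $h_{\max}=1$, one has $\int_0^x\rho_b(s)s^{-1}\dd s=1+\log(x/b)$ for $b\le x\le\frac12$, which is unbounded as $b\to0$. Even granting that bound, combining it with $h(x)\ge h_{\max}x^m$ gives the Fubini kernel $s\int_s^{1/2}x^{-m-1}\dd x\sim s^{1-m}$, which is unbounded for $m\ge2$. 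The ingredients you name for $H(x)\le Cxh(x)$ also fall short. From $h(s)\ge h(\frac12)(2s)^m$ and $h\le h_{\max}\le2^mh(\frac12)$ you only get $H(x)\le x^{1-m}h(x)$. These pointwise power bounds on $h$ are far from sharp when $h$ is nearly constant on most of $(0,\frac12]$, which is the typical thin-domain situation.

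\textbf{What is missing.} First, $H(x)\le 2^m x\,h(x)$ for $x\le\frac12$. This comes from the homothety toward $B$ applied at the variable point $x$, not at $\frac12$: for $s<x$ one has $\frac{1-x}{1-s}\Omega_s\subset\Omega_x$, hence $h(s)\le2^mh(x)$.

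Second, the logarithm you met is an artifact of the crude Cauchy--Schwarz bound $F^2\le H\int_0^xhf^2$, not of the geometry. Use $x^{-2}\le4^mh^2/H^2$ and then the sharp Hardy inequality in the variable $t=H(x)$, with $q=f\circ H^{-1}$:
\[
 \int_0^{1/2}\frac{F^2}{h\,x^2}\dd x
 \le4^m\int_0^{H(1/2)}\Bigl|\frac1t\int_0^tq\Bigr|^2\dd t
 \le4^{m+1}\int_0^{1/2}h|f|^2\dd x .
\]
Equivalently, your weighted Cauchy--Schwarz idea works if the weight is adapted to the measure $h\dd x$, namely $H^{-1/2}$ instead of $s^{-1}$. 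Then $F(x)^2\le2H(x)^{1/2}\int_0^xhH^{1/2}|f|^2$, and $\int_s^{1/2}hH^{-3/2}\dd x\le2H(s)^{-1/2}$, so Fubini closes with constant $4^{m+1}$. The right half is symmetric, and adding the two halves gives $4^n$.
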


\begin{proof}
The existence and uniqueness were established above by Lax--Milgram.
For the endpoint argument, set $w=hv'$. Cauchy--Schwarz gives
\[
 \int_0^1|w|\dd x\le\sqrt{M_h}\|v\|_a<\infty,
 \qquad
 \int_0^1|hf|\dd x\le\sqrt{M_h}\|f\|_{L^2(h\dd x)}<\infty.
\]
Testing the weak equation with $C_c^\infty(0,1)$ functions gives
$w'=-hf$ in distributions. Consequently $w\in W^{1,1}(0,1)$ and
has well-defined traces $w(0),w(1)$ and an absolutely continuous
representative on $[0,1]$. For any $\phi\in C^\infty([0,1])$,
integration by parts yields
\[
 \int_0^1w\phi'\dd x
 =w(1)\phi(1)-w(0)\phi(0)+\int_0^1hf\phi\dd x.
\]
Comparing this identity with the weak equation shows that the
boundary term is zero. Choosing $\phi(x)=x$ gives $w(1)=0$;
choosing $\phi(x)=1-x$ gives $w(0)=0$. These smooth test functions
belong to $\mathcal V_h$ since $\int_0^1h<\infty$.
Thus
\[
 (hv')(0)=(hv')(1)=0.
\]
Integrating $w'=-hf$
from either endpoint proves \eqref{eq:fluxIdentity}.

We need a simple volume comparison. If $0<s<x<1$, convexity and
the endpoint $B$ give the inclusion
\[
 \frac{1-x}{1-s}\Omega_s\subset \Omega_x.
\]
Taking $m$-dimensional volumes yields
\begin{equation}\label{eq:volumecomp}
 h(s)\le\left(\frac{1-s}{1-x}\right)^m h(x).
\end{equation}
For $0<x\le1/2$, let $H(x)=\int_0^xh(s)\dd s$. Equation
\eqref{eq:volumecomp} implies
\begin{equation}\label{eq:massbound}
 H(x)\le2^m x h(x).
\end{equation}
Therefore
\begin{equation}\label{eq:HardyReduction}
 \int_0^{1/2}\frac{|F(x)|^2}{x^2h(x)}\dd x
 \le4^m\int_0^{1/2}\frac{|F(x)|^2h(x)}{H(x)^2}\dd x.
\end{equation}
Make the change of variables $t=H(x)$ and put
$q(t)=f(H^{-1}(t))$. The last integral becomes
\[
 \int_0^{H(1/2)}\left|\frac1t\int_0^tq(s)\dd s\right|^2\dd t.
\]
We use the elementary Hardy inequality
\begin{equation}\label{eq:Hardy}
 \int_0^L\left|\frac1t\int_0^tq(s)\dd s\right|^2\dd t
 \le4\int_0^L|q(t)|^2\dd t.
\end{equation}
For completeness, for a smooth real-valued $q$ write
$Q(t)=\int_0^tq(s)\dd s$. Integration by parts gives
\[
 \int_0^L\frac{Q^2}{t^2}\dd t
 =-\frac{Q(L)^2}{L}+2\int_0^L\frac{Qq}{t}\dd t
 \le2\left(\int_0^L\frac{Q^2}{t^2}\dd t\right)^{1/2}
          \|q\|_{L^2(0,L)}.
\]
This proves \eqref{eq:Hardy}; approximation extends it to $L^2$.
Combining \eqref{eq:HardyReduction} and \eqref{eq:Hardy}, we obtain
\[
 \int_0^{1/2}\frac{|F(x)|^2}{x^2h(x)}\dd x
 \le4^{m+1}\int_0^{1/2}h|f|^2\dd x.
\]
Reflecting $x\mapsto1-x$ gives the same bound on the right half,
using the tail representation of $F$ in \eqref{eq:fluxIdentity}.
Adding the two half-interval estimates and recalling $m+1=n$
proves \eqref{eq:fluxEstimate}.
\end{proof}

\section{Residual estimate for the lifted inverse problem}

Let
\[
 \mathfrak a_\Om(u,z)=\int_\Om\nabla u\cdot\nabla z.
\]
For $f$ and $v$ as in Lemma~\ref{lem:flux}, define the residual
\begin{equation}\label{eq:rdef}
 r_f(z)=\mathfrak a_\Om(Jv,z)-\langle Jf,z\rangle_{L^2(\Om)}.
\end{equation}

\begin{lemma}\label{lem:residual}
For every $z\in H^1(\Om)$,
\begin{equation}\label{eq:residual}
 |r_f(z)|\le2^n b_n\eps\,
       \|f\|_{L^2(h\dd x)}\|\nabla_yz\|_{L^2(\Om)}.
\end{equation}
Moreover,
\begin{equation}\label{eq:residualOrthogonal}
 r_f(J\phi)=0,\qquad \phi\in\mathcal V_h.
\end{equation}
\end{lemma}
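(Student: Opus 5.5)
The orthogonality statement \eqref{eq:residualOrthogonal} is immediate. For $\phi\in\mathcal V_h$ we have $\nabla J\phi=(\phi',0)$, so by Fubini
\[
 \mathfrak a_\Om(Jv,J\phi)=\int_0^1hv'\phi'\dd x,
 \qquad
 \langle Jf,J\phi\rangle_{L^2(\Om)}=\int_0^1hf\phi\dd x .
\]
These two quantities agree by the weak equation \eqref{eq:weak1d}, so $r_f(J\phi)=0$.

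For \eqref{eq:residual}, a density argument lets me take $z$ smooth on $\overline\Om$; both sides are continuous in $H^1(\Om)$, since $Jv\in H^1(\Om)$ by \eqref{eq:isometryH1}. The plan is to split off the section-constant part of $z$. Again by Fubini,
\[
 \mathfrak a_\Om(Jv,z)=\int_0^1v'(x)\int_{\Omega_x}z_x\dd y\dd x=\int_0^1hv'\,P(z_x)\dd x,
\]
and $\langle Jf,z\rangle=\int_0^1hf\,Pz\dd x$. By Lemma~\ref{lem:average}, $P(z_x)=(Pz)'-B_z$. This gives
\[
 r_f(z)=\Bigl[\int_0^1hv'(Pz)'\dd x-\int_0^1hf\,Pz\dd x\Bigr]-\int_0^1hv'B_z\dd x .
\]
The bracket vanishes if $Pz$ is an admissible test function in \eqref{eq:weak1d}. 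We have $Pz\in L^2(h\dd x)$ since $P$ is a contraction. Also $Pz$ is locally absolutely continuous, and $h|(Pz)'|^2$ is integrable, using $|h(Pz)'|\le |hP(z_x)|+|hB_z|$ together with \eqref{eq:Bbound}. The factor $1/d(x)$ there is the danger, because for smooth $z$ we only know $|hB_z|\lesssim\eps\sqrt h/d$. I would avoid this issue as follows. Instead of requiring $Pz\in\mathcal V_h$, use the flux identity \eqref{eq:fluxIdentity}, $hv'=-F$, and integrate by parts directly on $[\delta,1-\delta]$:
\[
 \int_\delta^{1-\delta}hv'(Pz)'\dd x=-[F\,Pz]_\delta^{1-\delta}+\int_\delta^{1-\delta}F'Pz\dd x ,
 \qquad F'=hf .
\]
Since $z$ is bounded, $|Pz|\le\|z\|_\infty$. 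Moreover $F(\delta)\to0$ and $F(1-\delta)\to0$ because $hf\in L^1$. Letting $\delta\to0$ kills the boundary term, and the bracket equals zero. The remaining integral $\int hv'B_z$ converges absolutely by the estimate below, which justifies the limit.

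It then remains to bound $\int_0^1 hv'B_z\dd x$. By \eqref{eq:Bbound} and Cauchy--Schwarz,
\[
 \Bigl|\int_0^1 v'\,(hB_z)\dd x\Bigr|
 \le b_n\eps\int_0^1\frac{\sqrt h\,|v'|}{d(x)}\|\nabla_yz(x,\cdot)\|_{L^2(\Omega_x)}\dd x
 \le b_n\eps\Bigl(\int_0^1 h\frac{|v'|^2}{d^2}\dd x\Bigr)^{1/2}\|\nabla_yz\|_{L^2(\Om)} .
\]
The flux estimate \eqref{eq:fluxEstimate} bounds the first factor by $2^n\|f\|_{L^2(h\dd x)}$, which gives exactly \eqref{eq:residual}.

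The main obstacle is the endpoint justification in the integration by parts, that is, ensuring no boundary contribution near $x=0,1$ where $h$ may vanish and $B_z$ blows up like $1/d$. The cutoff at $\delta$, combined with $F(0)=F(1)=0$, boundedness of $Pz$ for smooth $z$, and absolute integrability from the Hardy-type bound \eqref{eq:fluxEstimate}, is the route I would take.
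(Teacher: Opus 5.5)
Your proposal is correct and follows the paper's proof essentially step for step. You reduce to smooth $z$ by density, use Lemma~\ref{lem:average} to write $P(z_x)=(Pz)'-B_z$, integrate by parts on $[\delta,1-\delta]$ using $hv'=-F$ with $F(0)=F(1)=0$ and $Pz$ bounded to get $r_f(z)=-\int_0^1hv'B_z\dd x$, and then combine \eqref{eq:Bbound}, Cauchy--Schwarz and \eqref{eq:fluxEstimate}, with \eqref{eq:residualOrthogonal} read off from the weak equation just as the paper does.
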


\begin{proof}
First take $z$ smooth on a neighborhood of $\overline\Om$.
Lemma~\ref{lem:average} gives
\[
 \mathfrak a_\Om(Jv,z)
 =\int_0^1hv'P(z_x)\dd x
 =\int_0^1hv'(Pz)'\dd x-\int_0^1hv'B_z\dd x.
\]
The first term can be integrated by parts using
$hv'=-F$. In fact, $Pz$ is bounded and locally absolutely
continuous, and $F(0)=F(1)=0$. The terms are integrable by
\eqref{eq:Bbound} and \eqref{eq:fluxEstimate}; integrating first
on $[\delta,1-\delta]$ and then letting $\delta\downarrow0$
therefore gives
\begin{equation}\label{eq:residualFormula}
 r_f(z)=-\int_0^1hv'B_z\dd x.
\end{equation}
By \eqref{eq:Bbound} and Cauchy--Schwarz,
\begin{align*}
 |r_f(z)|
 &\le b_n\eps\int_0^1\frac{|v'|}{d(x)}\sqrt h
             \|\nabla_yz(x,\cdot)\|_{L^2(\Omega_x)}\dd x\\
 &\le b_n\eps
       \left(\int_0^1h\frac{|v'|^2}{d(x)^2}\dd x\right)^{1/2}
       \|\nabla_yz\|_{L^2(\Om)}.
\end{align*}
Lemma~\ref{lem:flux} proves \eqref{eq:residual}.
Density of restrictions of smooth functions in $H^1(\Om)$ then
proves the bound for all $z\in H^1(\Om)$.

Finally, \eqref{eq:weak1d} directly gives
\[
 \mathfrak a_\Om(Jv,J\phi)
 =\int_0^1hv'\phi'\dd x
 =\int_0^1hf\phi\dd x
 =\langle Jf,J\phi\rangle,
\]
which proves \eqref{eq:residualOrthogonal}.
\end{proof}

\section{Quadratic comparison of inverse operators}\label{sec:inverse}

Write
\[
 \mathcal H_\Om=\left\{g\in L^2(\Om):\int_\Om g=0\right\},
 \qquad
 \mathcal H_N=\left\{f\in L^2(h\dd x):\int_0^1hf\dd x=0\right\}.
\]
Both $P$ and $J$ preserve zero integral. Let $A_\Om$ and $A_N$
be the positive Neumann operators restricted to these mean-zero
spaces. Their inverse operators are well defined and compact by
coercivity and the compact embeddings discussed above. On
$\mathcal H_\Om$, set
\begin{equation}\label{eq:TS}
 T=A_\Om^{-1},\qquad S=JA_N^{-1}P.
\end{equation}
Both are compact positive self-adjoint operators.

\begin{theorem}\label{thm:inverse}
On $\mathcal H_\Om$, we have
\begin{equation}\label{eq:inverse}
 \boxed{\qquad 0\le T-S\le16^n\eps^2 I.\qquad}
\end{equation}
Where $I$ is the identity operator.
\end{theorem}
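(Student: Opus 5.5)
Fix $g\in\mathcal H_\Om$ and set $u=Tg\in H^1(\Om)\cap\mathcal H_\Om$, $f=Pg\in\mathcal H_N$, and $v=A_N^{-1}f\in\mathcal V_h^0$, so that $Sg=Jv$. The plan is to compute $\langle (T-S)g,g\rangle$ as an energy of the error $e=u-Jv$ and then bound that energy by the residual estimate of Lemma~\ref{lem:residual}. Both the Galerkin-type orthogonality \eqref{eq:residualOrthogonal} and the quantitative bound \eqref{eq:residual} will be used.

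First, I will record the basic identities. Since $P=J^*$, we have $\langle Jf,z\rangle=\langle g,JPz\rangle$. The definition of $u$ gives $\mathfrak a_\Om(u,z)=\langle g,z\rangle$ for all $z\in H^1(\Om)$. Taking $z=J\phi$ and using \eqref{eq:weak1d}, I get $\mathfrak a_\Om(u,J\phi)=\langle g,J\phi\rangle=\langle Pg,\phi\rangle_{h}=\mathfrak a_\Om(Jv,J\phi)$. Hence $\mathfrak a_\Om(e,J\phi)=0$ for every $\phi\in\mathcal V_h$, which is the content of \eqref{eq:residualOrthogonal}.

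Next come the quadratic form identities. We have
\[
\langle Tg,g\rangle=\mathfrak a_\Om(u,u),\qquad \langle Sg,g\rangle=\langle v,Pg\rangle_h=\mathfrak a_N(v,v)=\mathfrak a_\Om(Jv,Jv).
\]
By the orthogonality above, $\mathfrak a_\Om(u,u)=\mathfrak a_\Om(Jv,Jv)+\mathfrak a_\Om(e,e)$. Therefore $\langle (T-S)g,g\rangle=\|\nabla e\|^2\ge0$, which is the lower bound in \eqref{eq:inverse}.

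For the upper bound, I will estimate $\|\nabla e\|^2$ by the residual. For any $z\in H^1(\Om)$,
\[
\mathfrak a_\Om(e,z)=\langle g,z\rangle-\mathfrak a_\Om(Jv,z)=\langle g,z-JPz\rangle-r_f(z).
\]
Taking $z=e$ splits the energy into two terms. The residual term is bounded by \eqref{eq:residual}:
\[
|r_f(e)|\le2^nb_n\eps\|f\|_h\|\nabla_ye\|\le 2^nb_n\eps\|g\|\,\|\nabla e\|,
\]
using $\|Pg\|_h\le\|g\|$. For the other term, $\nabla_y(Jv)=0$, so $\nabla_y e=\nabla_y u$. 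Also $JP$ is an orthogonal projection, so $\langle g,e-JPe\rangle\le\|g\|\,\|e-JPe\|$. By \eqref{eq:transverseP} this is at most $\|g\|\,p_n\eps\|\nabla_ye\|$. Combining the two terms gives
\[
\|\nabla e\|^2\le (p_n+2^nb_n)\eps\|g\|\,\|\nabla e\|,
\]
hence
\[
\langle(T-S)g,g\rangle=\|\nabla e\|^2\le (p_n+2^nb_n)^2\eps^2\|g\|^2.
\]

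The main remaining step is constant bookkeeping: I need to check that $(p_n+2^nb_n)^2\le16^n$ for all $n\ge2$. Here $p_n=2^{n/2}$ and $b_n=1+(n-1)2^{n/2}$. Then $2^nb_n\le 2^n\cdot n2^{n/2}$, so $p_n+2^nb_n$ is roughly $n2^{3n/2}$, and its square is roughly $n^2 8^n$. This is at most $16^n$ because $n^2\le2^n$ for $n\ge4$. The cases $n=2,3$ are settled by direct computation; for example, at $n=2$ we have $p_2=2$, $b_2=3$, giving $(2+12)^2=196\le256$. If the constant fails to fit, I will sharpen the estimate by using that $e-JPe=u-JPu$ involves only $\nabla_y$, and by combining both terms through a single Cauchy--Schwarz on $\|\nabla_y e\|$. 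A minor technical point is that Lemma~\ref{lem:residual} is stated for all $z\in H^1(\Om)$, so applying it to $z=e$ is legitimate.
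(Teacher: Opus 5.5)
Your proof is correct and follows the paper's argument essentially step for step: the same error $e=u-Jv$, the same splitting $\mathfrak a_\Om(e,z)=\langle g,z-JPz\rangle-r_f(z)$ bounded via \eqref{eq:transverseP} and \eqref{eq:residual}, and the same Galerkin orthogonality giving $\langle (T-S)g,g\rangle=\|\nabla e\|^2$ (you phrase it as a Pythagorean identity, the paper as $\langle w,g\rangle=\mathfrak a_\Om(w,u)$). The only slip is in the constant bookkeeping: your crude bound $b_n\le n2^{n/2}$ gives $p_n+2^nb_n\le 2^{n/2}+n2^{3n/2}$, which equals $260>256=4^4$ at $n=4$, so either check $n=4$ directly ($L_4=4+16\cdot 13=212$) or, as the paper does, keep the exact form $L_n=2^{3n/2}(n-1+2^{-n/2}+2^{-n})$ and use $2^{-n/2}+2^{-n}\le1$ together with $n\le 2^{n/2}$ for $n\ge4$.
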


\begin{proof}
Take $g\in\mathcal H_\Om$ and put
\[
 f=Pg,\qquad u=Tg,\qquad v=A_N^{-1}f,\qquad
 w=u-Jv=(T-S)g.
\]
Since $P$ and $J$ preserve integrals, $f=Pg$ has zero weighted mean,
$u$ and $Jv$ have zero integral on $\Om$, and
$w\in H^1(\Om)\cap\mathcal H_\Om$.
For every $z\in H^1(\Om)\cap\mathcal H_\Om$, the defining equation
for $u=A_\Om^{-1}g$ gives
\[
 \mathfrak a_\Om(u,z)=\langle g,z\rangle_{L^2(\Om)}.
\]
On the other hand, the residual in \eqref{eq:rdef} is defined by
\[
 r_f(z)=\mathfrak a_\Om(Jv,z)-\langle Jf,z\rangle_{L^2(\Om)},
\]
so that
$\mathfrak a_\Om(Jv,z)=\langle Jf,z\rangle+r_f(z)$.
By linearity, subtraction gives
\begin{align*}
 \mathfrak a_\Om(w,z)
 &=\mathfrak a_\Om(u,z)-\mathfrak a_\Om(Jv,z)\\
 &=\langle g,z\rangle-\langle Jf,z\rangle-r_f(z).
\end{align*}
Thus
\begin{equation}\label{eq:wweak}
 \mathfrak a_\Om(w,z)
 =\langle g-Jf,z\rangle-r_f(z).
\end{equation}
All pairings here are in $L^2(\Om)$.

The projection identity used next follows directly
from Fubini's theorem:
\[
 \langle Jf,z\rangle_{L^2(\Om)}
 =\langle f,Pz\rangle_{L^2(h\dd x)}
 =\langle Pg,Pz\rangle_{L^2(h\dd x)}
 =\langle g,JPz\rangle_{L^2(\Om)}.
\]
Consequently
\[
 \langle g-Jf,z\rangle=\langle g,z-JPz\rangle,
 \qquad \|Pg\|_{L^2(h\dd x)}\le\|g\|_{L^2(\Om)}.
\]
The latter inequality follows from Cauchy--Schwarz on each section.

Set $L_n=p_n+2^n b_n$. Using \eqref{eq:transverseP} and
\eqref{eq:residual} in \eqref{eq:wweak}, we find
\begin{equation}\label{eq:weakestimate}
 |\mathfrak a_\Om(w,z)|
 \le L_n\eps\|g\|_{L^2(\Om)}
                   \|\nabla_yz\|_{L^2(\Om)}.
\end{equation}
In particular, $z=w$ gives
\begin{equation}\label{eq:energyError}
 \|\nabla w\|_{L^2(\Om)}
 \le L_n\eps\|g\|_{L^2(\Om)}.
\end{equation}

Since $v\in\mathcal V_h$
has zero weighted integral, we get $g-Jf$ is orthogonal to $Jv$
and \eqref{eq:residualOrthogonal} gives
\begin{equation}\label{eq:energyOrthogonal}
 \mathfrak a_\Om(w,Jv)=0.
\end{equation}
So, we obtain
\begin{align}
 \langle (T-S)g,g\rangle
 &=\langle w,g\rangle
   =\mathfrak a_\Om(w,u)\notag\\
 &=\mathfrak a_\Om(w,w+Jv)
   =\|\nabla w\|_{L^2(\Om)}^2.
   \label{eq:quadraticIdentity}
\end{align}
Thus the quadratic form of $T-S$ is nonnegative. By
\eqref{eq:energyError},
\[
 0\le\langle (T-S)g,g\rangle
 \le L_n^2\eps^2\|g\|_{L^2(\Om)}^2.
\]
Finally, for every $n\ge2$,
\[
 L_n=2^{3n/2}\bigl(n-1+2^{-n/2}+2^{-n}\bigr)\le4^n.
\]
The inequality is immediate for $n=2,3$; for $n\ge4$ it follows
from $2^{-n/2}+2^{-n}\le1$ and $n\le2^{n/2}$.
Consequently $L_n^2\le16^n$, which proves \eqref{eq:inverse}.
\end{proof}

\begin{remark}
The square in \eqref{eq:quadraticIdentity} is what turns the
$O(\eps)$ bound on the energy error into an $O(\eps^2)$ bound
on the inverse operators. The argument compares the entire
mean-zero operators, so it does not require identifying or
matching individual eigenfunctions.
\end{remark}

\section{Eigenvalue comparison and uniform dependence on the index}\label{sec:comparison}

\begin{proof}[Proof of Theorem~\ref{thm:main}]
The positive eigenvalues of $T$, arranged in nonincreasing order,
are $1/\mu_k(\Om)$, $k\ge1$. Since $J$ is an isometry and $P=J^*$,
$S$ is unitarily equivalent to $A_N^{-1}$ on $J\mathcal H_N$
and is zero on $\ker P$. Consequently its positive eigenvalues
are $1/\lambda_k$, with the same multiplicities as those of $A_N$.

The min--max principle applied to \eqref{eq:inverse} yields
\begin{equation}\label{eq:reciprocal}
 \frac1{\lambda_k}
 \le\frac1{\mu_k(\Om)}
 \le\frac1{\lambda_k}+16^n\eps^2,
 \qquad k\ge1.
\end{equation}
Inverting these positive quantities proves \eqref{eq:main}.
It also gives the slightly more precise estimate
\begin{equation}\label{eq:preciseGap}
 0\le\lambda_k-\mu_k(\Om)
 \le\frac{16^n\eps^2\lambda_k^2}
           {1+16^n\eps^2\lambda_k}
 \le16^n\eps^2\lambda_k^2.
\end{equation}

It remains to bound $\lambda_k$ using only $n$ and $k$.
Let $M=\sup_{0<x<1}h(x)$. Convexity and the endpoints imply
\begin{equation}\label{eq:tentPower}
 h(x)\ge M\min\{x,1-x\}^{m}.
\end{equation}
Indeed, for any $t\in(0,1)$, the homothetic inclusion toward $A$
gives
\[
 h(x)\ge (x/t)^m h(t)\ge x^m h(t),\qquad x\le t,
\]
and the inclusion toward $B$ gives
\[
 h(x)\ge\left(\frac{1-x}{1-t}\right)^m h(t)
       \ge(1-x)^m h(t),\qquad x\ge t.
\]
Both bounds imply $h(x)\ge d(x)^m h(t)$, so taking the supremum
in $t$ proves \eqref{eq:tentPower}. In particular,
\[
 4^{-m}M\le h(x)\le M,\qquad \frac14\le x\le\frac34.
\]
Partition $[1/4,3/4]$ into $k+1$ equal subintervals, each of
length $\ell=1/[2(k+1)]$. On each subinterval take its first
Dirichlet sine function and extend it by zero. These functions
belong to $\mathcal V_h$ and span a $(k+1)$-dimensional space.
Their supports are disjoint up to endpoints, so every nonzero
function $\phi$ in their span satisfies
\[
 \frac{\int_0^1h|\phi'|^2\dd x}{\int_0^1h|\phi|^2\dd x}
 \le4^m\frac{\pi^2}{\ell^2}
 =4^n\pi^2(k+1)^2.
\]
The min--max principle, with the zero eigenvalue included, gives
\begin{equation}\label{eq:eigenvalueUpper}
 \lambda_k\le4^n\pi^2(k+1)^2.
\end{equation}
Combining \eqref{eq:preciseGap} and \eqref{eq:eigenvalueUpper}
proves \eqref{eq:gap} with the constant in
\eqref{eq:constantnk}.
\end{proof}

\section{Sharpness of the quadratic estimate}\label{sec:rectsharp}

Now, we show that the exponent two in Theorem~\ref{thm:main} is optimal.

Let $0<\eps<1/\sqrt2$, put $a=\sqrt{1-\eps^2}$, and define
\[
 \Omega_\eps=(0,a)\times(0,\eps).
\]
Then $\diam\Omega_\eps=1$ and $W(\Omega_\eps)=\eps$,
where $W$ is the projective width defined in \eqref{eq:width}.
Indeed, the projection length onto a unit normal $(v_1,v_2)$ is
$a|v_1|+\eps|v_2|\geq\eps$, with equality for $(v_1,v_2)=(0,1)$.
Use the diameter joining $(0,0)$ to $(a,\eps)$ as the comparison
axis.

Separation of variables gives
\[
 \operatorname{spec}_N(\Omega_\eps)
 =\left\{\pi^2\left(\frac{p^2}{a^2}
                       +\frac{q^2}{\eps^2}\right):
              p,q\in\mathbb N_0\right\}.
\]
Consequently, for every fixed $k\ge1$ and
$\eps<1/\sqrt{k^2+1}$,
\begin{equation}\label{eq:rectmu}
 \mu_k(\Omega_\eps)=\frac{k^2\pi^2}{1-\eps^2}.
\end{equation}
The corresponding first $k$ positive modes have no transverse
oscillation.

Put $b=\eps^2$. Up to a constant factor, the section length is
\begin{equation}\label{eq:rectweight}
 \rho_b(x)=
 \begin{cases}
 x/b,&0<x<b,\\
 1,&b\le x\le1-b,\\
 (1-x)/b,&1-b<x<1.
 \end{cases}
\end{equation}
More precisely, the section length is $(\eps/a)\rho_b$. Its density is trapezoidal.

We solve
\[
 -(\rho_b u')'=q^2\rho_bu
\]
with natural endpoint conditions. On the left cap $(0,b)$, the
regular solution is $u(x)=J_0(qx)$. The singular Bessel solution
is excluded by finite weighted energy. At $x=b$,
\[
 \frac{u'(b)}{q u(b)}
 =-\frac{J_1(qb)}{J_0(qb)}.
\]
Thus, on the central interval, the solution can be written as a
multiple of
\[
 \cos\bigl(q(x-b)+\delta(qb)\bigr),\qquad
 \delta(z)=\arctan\frac{J_1(z)}{J_0(z)}.
\]
The right cap is the reflection of the left cap. Matching both
logarithmic derivatives gives, for every fixed low mode,
\begin{equation}\label{eq:rectroot}
 q_k(1-2b)+2\delta(q_kb)=k\pi,
 \qquad \mu_k(N_\eps)=q_k^2.
\end{equation}
For sufficiently small $b$, $J_0(q_kb)>0$ and the principal
arctangent is used. The cap solutions have no zeros, and the phase
on the central interval identifies the root with the $k$th
positive eigenvalue. For completeness, on any fixed bounded $q$ interval the phase
function on the left of \eqref{eq:rectroot} has derivative
$1-b+O(b^3)>0$. It vanishes at $q=0$ and converges uniformly to
$q$. Thus its successive positive roots correspond exactly to
$k=1,2,\ldots$ in the bounded spectral range under consideration.

Here $J_0$ and $J_1$ are specified by the convergent series
\[
 J_0(z)=\sum_{j=0}^{\infty}\frac{(-1)^j(z/2)^{2j}}{(j!)^2},
 \qquad
 J_1(z)=\sum_{j=0}^{\infty}
       \frac{(-1)^j(z/2)^{2j+1}}{j!(j+1)!}.
\]
Termwise differentiation gives $J_0'=-J_1$, and direct series
 division gives
\[
 \frac{J_1(z)}{J_0(z)}=\frac z2+\frac{z^3}{16}+O(z^5),
 \qquad
 \delta(z)=\frac z2+\frac{z^3}{48}+O(z^5).
\]
Therefore \eqref{eq:rectroot} becomes
\[
 q_k(1-b)+\frac{q_k^3b^3}{24}+O_k(b^5)=k\pi.
\]
The implicit function theorem at $(q,b)=(k\pi,0)$ gives
\begin{align}
 q_k&=\frac{k\pi}{1-b}
       -\frac{(k\pi)^3b^3}{24(1-b)^4}+O_k(b^5),\label{eq:rectq}\\
 \mu_k(N_\eps)
 &=\frac{k^2\pi^2}{(1-b)^2}
       -\frac{(k\pi)^4b^3}{12(1-b)^5}+O_k(b^5).
       \label{eq:rectlambda}
\end{align}
In particular, the less precise expansion
\[
 \mu_k(N_\eps)=\frac{k^2\pi^2}{(1-\eps^2)^2}+O_k(\eps^6)
\]
is already sufficient for sharpness.

\begin{proposition}\label{prop:rectangle}
For each fixed $k\ge1$, the diameter-one rectangles above satisfy
\begin{equation}\label{eq:rectgap}
 \boxed{\quad
 \mu_k(N_\eps)-\mu_k(\Omega_\eps)
 =k^2\pi^2\eps^2+2k^2\pi^2\eps^4+O_k(\eps^6).
 \quad}
\end{equation}
In particular,
\[
 \lim_{\eps\downarrow0}
 \frac{\mu_k(N_\eps)-\mu_k(\Omega_\eps)}{W(\Omega_\eps)^2}
 =k^2\pi^2>0.
\]
\end{proposition}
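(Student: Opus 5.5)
Subtracting the exact rectangle eigenvalue \eqref{eq:rectmu} from the expansion \eqref{eq:rectlambda} of $\mu_k(N_\eps)$, both in powers of $b=\eps^2$, gives \eqref{eq:rectgap}. The limit statement then follows from $W(\Omega_\eps)=\eps$.

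\textbf{Rectangle side.} For $\eps<1/\sqrt{k^2+1}$, formula \eqref{eq:rectmu} gives $\mu_k(\Omega_\eps)=k^2\pi^2(1-\eps^2)^{-1}$. Any eigenvalue with $q\ge1$ is at least $\pi^2/\eps^2>\pi^2(k^2+1)$. This exceeds $k^2\pi^2/(1-\eps^2)$ for small $\eps$, so the first $k$ positive eigenvalues are indeed the purely longitudinal ones $p=1,\dots,k$. Expanding,
\[
 \mu_k(\Omega_\eps)=k^2\pi^2\bigl(1+\eps^2+\eps^4+O(\eps^6)\bigr).
\]

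\textbf{Weighted interval side.} Since the section length is $(\eps/a)\rho_b$, the constant factor cancels in the Rayleigh quotient. Hence $\mu_k(N_\eps)$ is the $k$th eigenvalue for the weight $\rho_b$. The interval has length $1$ because it is the diameter of $\Omega_\eps$, and $b=\eps^2$ is the length of each triangular cap. I would check this by projecting the rectangle onto the diagonal: each cap has length $\eps^2$ after the scaling in which the diagonal has unit length. With the root characterization \eqref{eq:rectroot} and expansion \eqref{eq:rectlambda} taken as established above, we get
\[
 \mu_k(N_\eps)=k^2\pi^2(1-b)^{-2}+O_k(b^3)=k^2\pi^2\bigl(1+2\eps^2+3\eps^4+O(\eps^6)\bigr).
\]

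\textbf{Subtraction.} The difference is
\[
 k^2\pi^2\bigl[(2-1)\eps^2+(3-1)\eps^4\bigr]+O_k(\eps^6)=k^2\pi^2\eps^2+2k^2\pi^2\eps^4+O_k(\eps^6),
\]
which is \eqref{eq:rectgap}. Dividing by $W(\Omega_\eps)^2=\eps^2$ and letting $\eps\downarrow0$ gives the limit $k^2\pi^2$.

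\textbf{Main obstacle.} The delicate point is confirming that the $k$th root of \eqref{eq:rectroot} really is $\mu_k(N_\eps)$, and that the error in \eqref{eq:rectlambda} is uniform for fixed $k$. Both rest on the monotone-phase argument sketched before the proposition. I would make explicit that the phase $\Phi_b(q)=q(1-2b)+2\delta(qb)$ is strictly increasing on $[0,(k+1)\pi]$, with $\Phi_b(0)=0$. I would also show it converges uniformly to $q$, so its roots at $\pi,\dots,k\pi$ are isolated and nondegenerate. The implicit function theorem applied to $G(q,b)=\Phi_b(q)-k\pi$ then yields the expansion, since $G$ is analytic near $(k\pi,0)$ and $\partial_qG(k\pi,0)=1$. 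The remaining verification that the cap geometry gives exactly $b=\eps^2$ is elementary.
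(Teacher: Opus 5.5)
Your proposal is correct and follows the paper's proof. Both subtract \eqref{eq:rectmu} from \eqref{eq:rectlambda} with $b=\eps^2$ and expand; the paper writes the leading difference as $k^2\pi^2 b/(1-b)^2=k^2\pi^2(b+2b^2+O(b^3))$, and both rely on the monotone-phase argument given just before the proposition to identify the $k$th root of \eqref{eq:rectroot} with $\mu_k(N_\eps)$.
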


\begin{proof}
Subtract \eqref{eq:rectmu} from \eqref{eq:rectlambda}, using
$b=\eps^2$. The leading difference is exactly
\[
 k^2\pi^2\left(\frac1{(1-b)^2}-\frac1{1-b}\right)
 =\frac{k^2\pi^2b}{(1-b)^2}.
\]
Expansion proves \eqref{eq:rectgap}.
\end{proof}

\begin{corollary}
For every fixed $k\ge1$, an estimate
\[
 0\le\mu_k(N)-\mu_k(\Omega)\le C_k W(\Omega)^{2+\eta},
 \qquad \eta>0,
\]
cannot hold uniformly over diameter-one bounded convex planar domains.
Nor can the general upper error be replaced by a function
$o(W^2)$ as $W\to0$.
\end{corollary}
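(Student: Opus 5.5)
The corollary follows from Proposition~\ref{prop:rectangle} by contradiction, using the rectangles $\Omega_\eps$ as the test family. These are diameter-one bounded convex planar domains with $W(\Omega_\eps)=\eps$, as computed above.

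\textbf{First claim.} Suppose that for some fixed $k\ge1$, some $\eta>0$ and some $C_k$, the estimate $0\le\mu_k(N)-\mu_k(\Omega)\le C_kW(\Omega)^{2+\eta}$ held for all such domains. We apply it to $\Omega_\eps$, with $N=N_\eps$ built on the chosen diameter joining $(0,0)$ to $(a,\eps)$. This is the diameter used in Proposition~\ref{prop:rectangle}. By \eqref{eq:rectgap} the left side equals $k^2\pi^2\eps^2+O_k(\eps^4)$. Dividing by $\eps^2$ gives
\[
 k^2\pi^2+O_k(\eps^2)\le C_k\eps^\eta .
\]
Letting $\eps\downarrow0$, the left side tends to $k^2\pi^2>0$ and the right side tends to $0$, which is a contradiction.

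\textbf{Second claim.} Suppose instead that there were an error function $\omega$ with $\omega(W)=o(W^2)$ as $W\to0$ and $\mu_k(N)-\mu_k(\Omega)\le\omega(W(\Omega))$ for all such domains. The same substitution gives
\[
 \frac{\mu_k(N_\eps)-\mu_k(\Omega_\eps)}{W(\Omega_\eps)^2}\le\frac{\omega(\eps)}{\eps^2}\to0 .
\]
This contradicts the limit $k^2\pi^2>0$ in Proposition~\ref{prop:rectangle}.

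The only point needing care is consistency of the comparison model. The statement must be read with $N$ constructed on a diameter, which is the setting of Theorem~\ref{thm:main}. The rectangle computation uses a legitimate diameter, so a uniform bound over all diameters, or over any admissible choice of diameter, would in particular apply to this one. Everything else is immediate, so I do not expect a genuine obstacle beyond invoking Proposition~\ref{prop:rectangle}.
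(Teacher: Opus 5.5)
Your proposal is correct and is the same argument as the paper's proof. You apply the hypothetical bound to the diameter-one rectangles $\Omega_\eps$ with $W(\Omega_\eps)=\eps$, divide by $W^2$, and use Proposition~\ref{prop:rectangle}: the quotient tends to $k^2\pi^2>0$, while the proposed right-hand side tends to zero. Your remark on the choice of diameter is harmless, since the two diagonals of the rectangle are exchanged by a symmetry and give the same weighted interval.
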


\begin{proof}
Apply the proposed estimate to the diameter-one rectangles in
Proposition~\ref{prop:rectangle}, divide by $W^2$, and let $W\to0$.
The left-hand quotient tends to $k^2\pi^2>0$, whereas the proposed
upper bound tends to zero.
\end{proof}

For an arbitrary diameter $D$, dilation of these same examples gives
\[
 \mu_k(N)-\mu_k(\Omega)
 =k^2\pi^2\frac{W^2}{D^4}
   +O_k\left(\frac{W^4}{D^6}\right),\qquad W/D\to0.
\]
This also verifies the scale of the quadratic estimate.

\end{document}